\DeclareMathOperator{\dif}{d}
\begin{document}

\title{ Construction of a global solution for the one dimensional singularly-perturbed boundary value problem}

\author[a1]{Samir Karasulji\' c \corref{c1}}  %
\ead{samir.karasuljic@untz.ba}

\author[a1]{Enes Duvnjakovi\' c}
\ead{enes.duvnjakovic@untz.ba}

\author[a1]{Vedad Pasic}
\ead{vedad.pasic@untz.ba}

\author[a1]{Elvis Barakovic}
\ead{elvis.barakovic@untz.ba}

\address[a1]{Department of Mathematics, Faculty of Natural Sciences and Mathematics, University of Tuzla, Univerzitetska 4, Tuzla, Bosnia and Herzegovina \vskip 0.1cm }

\cortext[c1]{Corresponding author}
\authors{S. Karasulji\' c, E. Duvnjakovi\' c, V. Pasic, E. Barakovic}

\begin{abstract}
We consider an approximate solution for the one--dimensional semilinear singularly--perturbed boundary value problem, using the previously obtained numerical
values of the boundary value problem in the mesh points and the representation of the exact solution using Green's function.
We present an $\varepsilon$--uniform convergence of such gained the approximate solutions, in the maximum norm of the order $\mathcal{O}\left(N^{-1}\right)$ on the observed domain.

After that, the constructed approximate solution is repaired and we obtain a solution, which also has $\varepsilon$--uniform convergence, but now of order $\mathcal{O}\left(\ln^2N/N^2\right)$   on $[0,1].$ In the end a numerical experiment is presented to confirm previously shown theoretical results.

\begin{keyword}
Singular perturbation\sep nonlinear\sep boundary layer\sep Bakhvalov mesh\sep layer-adapted mesh\sep uniform convergence.\MSC{65L10\sep 65L11\sep 65L50.}
\end{keyword}

\end{abstract}

\maketitle
\theoremstyle{definition}
\newtheorem{theorem}{Theorem}[section]
\newtheorem{lemma}[theorem]{Lemma}
\newtheorem{proposition}[theorem]{Proposition}
\newtheorem{corollary}[theorem]{Corollary}
\newtheorem{definition}[theorem]{Definition}
\newtheorem{example}[theorem]{Example}
\newtheorem{xca}[theorem]{Exercise}
\newtheorem{problem}[theorem]{Problem}
\theoremstyle{remark}
\newtheorem{remark}[theorem]{Remark}
\numberwithin{equation}{section}


\section{Introduction}

\noindent We will consider the singularly--perturbed boundary value problem
\begin{align}
  \varepsilon^2y''&=f(x,y),\:x\in I=[0,1]\label{uvod1},\\
 & y(0)=0,\:y(1)=0,\label{uvod2}
\end{align}
with the condition
\begin{align}
\dfrac{\partial f(x,y)}{\partial y}:=f_y\geqslant m>0,\:\forall(x,y)\in I\times\mathbb{R},\label{uvod3}
\end{align}
where $0<\varepsilon<1$ is a perturbation parameter, $f$ is a nonlinear function $f\in C^{k}\left(I\times \mathbb{R}\right),\:k\geqslant 2$ and $m$ is a real constant.

\noindent The boundary value problem \eqref{uvod1}--\eqref{uvod2}, with the condition \eqref{uvod3}, has a unique solution, see \cite{lorenz1982stability}. Contributions to  numerical solutions of the problem \eqref{uvod1}--\eqref{uvod2} with different assumptions on the function $f$ and similar problems were obtained by many authors, see for example Flaherty and O'Malley \cite{malley1977numerical}, Cvetkovi{\' c} and Herceg \cite{herceg1982numerical}, Herceg \cite{herceg1982some, herceg1990}, Herceg, Surla and Rapaji{\' c} \cite{herceg1998}, Kopteva \cite{kopteva2001maximum}, Lin{\ss} and Vulanovi{\' c} \cite{vulanovic2001uniform}, Niijima \cite{niijima}, Stynes and O'Riordan  \cite{stynes1987}, Vulanovi{\' c} \cite{vulanovic1983, vulanovic1989, vulanovic1993, vulanovic2004} etc.

\noindent The method that will be used in this paper in order to obtain a discrete approximate solution, i.e. values of the approximate solution in the mesh points, of the problem \eqref{uvod1}--\eqref{uvod3} was first developed by Boglaev \cite{boglaev1984approximate}, who constructed a difference scheme and showed convergence of order 1 on the modified Bakhvalov mesh. Using the method of \cite{boglaev1984approximate}, we constructed new difference schemes in \cite{ samir2011scheme, samir2010scheme} and we carried out numerical experiments.

In \cite{samir2015uniformlyconvergent, samir2015uniformly} we constructed new difference schemes and we proved uniqueness of the numerical solution and $\varepsilon$--uniform convergence on the modified Shishkin mesh and at the end presented numerical experiments. In this paper we will use the difference scheme from \cite{samir2015uniformly} in order to calculate values of the approximate solution of the problem on the mesh points and then construct an approximate solution.

\section{Theoretical background and known results}

\noindent Let us set up an arbitrary mesh on $[0,1]$
\begin{equation}
   0=x_0<x_1<\ldots<x_N=1.
 \label{thb1}
\end{equation}

\noindent A construction of a difference scheme, which will be used for calculation of the approximate solution of the problem \eqref{uvod1}--\eqref{uvod3} in the mesh points, is based on the representation of the exact solution on the interval $[x_i,x_{i+1}],\:i=0,\ldots,N-1$
\begin{equation}
 y_i(x)=y_iu_i^{I}(x)+y_{i+1}u_i^{II}(x)+\int_{x_i}^{x_{i+1}}{G_i(x,s)\psi(s,y(s))\dif s},
\label{thb2}
\end{equation}
where $G_i(x,s)$ is the Green's function
\begin{equation}
  G_i(x,s)=\frac{1}{\varepsilon^2w_i(s)}
        \left\{\begin{array}{cc}
          u_{i}^{II}(x)u_i^{I}(s),\:&x_i\leqslant x\leqslant s\leqslant x_{i+1},\\\\
          u_i^{I}(x)u_i^{II}(s), \:&x_i\leqslant s\leqslant x\leqslant x_{i+1},
        \end{array}
       \right.
\label{thb3}
\end{equation}
\begin{equation}
 \psi(s,y(s))=f(s,y(s))-\gamma y(s),
  \label{thb3a}
\end{equation}
and $w_i(s)=\frac{-\beta}{\sinh(\beta h_i)},$ $s\in[x_{i},x_{i+1}],$ $u_i^{I}(x)=\frac{\sinh(\beta(x_{i+1}-x))}{\sinh(\beta h_i)},$ $u_i^{II}(x)=\frac{\sinh(\beta(x-x_i))}{\sinh(\beta h_i)},$ $h_i=x_{i+1}-x_i,\:\beta=\frac{\sqrt{\gamma}}{\varepsilon},\,y_i:=y(x_i)$ and  $\gamma$ is a constant for which  $\gamma\geqslant f_y,$ (details can be found in \cite{samir2015uniformly}).
The difference scheme constructed in \cite{samir2015uniformly}, which we will use, has the following form
 \begin{equation}
 \dfrac{a_i+d_i}{2}\overline{y}_{i-1}-\left( \dfrac{a_i+d_i}{2}+\dfrac{a_{i+1}+d_{i+1}}{2}\right)\overline{y}_{i}+ \dfrac{a_{i+1}+d_{i+1}}{2}\overline{y}_{i+1}
                        =\dfrac{\triangle d_{i}}{\gamma}\overline{f}_{i-1}+\dfrac{\triangle d_{i+1}}{\gamma}\overline{f}_{i},
\label{thb4}
\end{equation}
where $\overline{y}_k,\:k\in\left\{i-1,i,i+1\right\}$ are values of the approximate solution in the mesh points, $\triangle d_i=d_i-a_i,$ $d_i=\frac{1}{\tanh(\beta h_i)},$ $a_i=\frac{1}{\sinh(\beta h_i)}$ and $\overline{f}_i=f((x_{i}+x_{i+1})/2,(\overline{y}_i+\overline{y}_{i+1})/2),\:i=1,\ldots,N-1.$ The difference scheme generates a system of nonlinear equations and the solutions of this system are values of the approximate solution in the mesh points. An answer to the question of existence and uniqueness will be given in the next theorem, however before that, it is necessary to define the operator (or discrete problem) $F:\mathbb{R}^{N+1}\mapsto\mathbb{R}^{N+1}$ and a corresponding norm that is necessary in formulation of the theorem. Therefore, we will now use the difference scheme \eqref{thb4} in order to obtain a discrete problem of the problem \eqref{uvod1}--\eqref{uvod3}. We have that
\begin{align}
  F\overline{y}=\left( \left(F\overline{y}\right)_0,\left(F\overline{y}\right)_1,\ldots,\left( F\overline{y}\right)_N \right)^T=0,
 \label{thb5}
\end{align}
where
\begin{eqnarray*}
  \left(F\overline{y}\right)_0&:=&0,\nonumber\\
  \left(F\overline{y}\right)_i&:=& \frac{\gamma}{\triangle d_i+\triangle d_{i+1}}
               \left[ \dfrac{a_i+d_i}{2}\overline{y}_{i-1}-\left( \dfrac{a_i+d_i}{2}+\dfrac{a_{i+1}+d_{i+1}}{2}\right)\overline{y}_{i}\right. \nonumber \\
               & &\quad     +\left. \dfrac{a_{i+1}+d_{i+1}}{2}\overline{y}_{i+1}
                  -\dfrac{\triangle d_{i}}{\gamma}\overline{f}_{i-1}-\dfrac{\triangle d_{i+1}}{\gamma}\overline{f}_{i}\right],\:i=1,\ldots,N-1\nonumber\\
  \left(F\overline{y}\right)_N&:=&0. \nonumber
\end{eqnarray*}

\noindent Here we use the maximum norm
 \begin{equation}
      \left\|u\right\|_{\infty}=\max_{0\leqslant i\leqslant N}\left|u_i\right|,
   \label{thb6}
 \end{equation}
for any vector $u=\left(u_0,u_1,\ldots,u_n\right)^T\in\mathbb{R}^{N+1}$ and the corresponding matrix norm.

\begin{theorem}{\rm \cite{samir2015uniformly}} The discrete problem $(\ref{thb5})$ for $\gamma\geq f_y,$ has  the unique solution\\ $\overline{y}=(\overline{y}_0, \overline{y}_1, \overline{y}_2, \ldots, \overline{y}_{N-1}, \overline{y}_{N})^{T},$ with $\overline{y}_0=\overline{y}_N=0.$ Moreover, the following stability inequality holds
\begin{equation}
     \left\|w-v\right\|_{\infty}\leqslant \frac{1}{m}\left\|Fw-Fv\right\|_{\infty},
 \label{thb7}
\end{equation}
for any vectors $v=\left(v_0,v_1,\ldots,v_N\right)^T\in\mathbb{R}^{N+1},\,w=\left( w_0,w_1,\ldots,w_N\right)^T\in\mathbb{R}^{N+1}.$
\end{theorem}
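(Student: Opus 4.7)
The plan is to establish the stability estimate \eqref{thb7} first and extract both uniqueness and existence from it. The device is classical: linearize $F$ by the mean value theorem and recognize the resulting linear operator as the negative of an $M$-matrix whose row sums are bounded below by $m$.

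Since each $\overline{f}_k = f\bigl((x_k+x_{k+1})/2,(\overline{y}_k+\overline{y}_{k+1})/2\bigr)$ depends on the unknowns only through a midpoint average, the mean value theorem yields
\begin{equation*}
\overline{f}_k^{w} - \overline{f}_k^{v} \;=\; \tfrac12\, f_{y}(\xi_k)\bigl[(w_k-v_k)+(w_{k+1}-v_{k+1})\bigr]
\end{equation*}
for an intermediate point $\xi_k = \xi_k(v,w)$. Substituting this into $(Fw-Fv)_i$ for $i=1,\ldots,N-1$ produces $Fw - Fv = A(v,w)(w-v)$ with $A$ tridiagonal. The off-diagonal entries have the form
\begin{equation*}
A_{i,i\pm 1}\;=\;\tfrac{\gamma}{\triangle d_i+\triangle d_{i+1}}\Bigl[\tfrac{a_j+d_j}{2}-\tfrac{\triangle d_j}{2\gamma}\,f_{y}(\xi)\Bigr],\qquad j\in\{i,i+1\},
\end{equation*}
and these are non-negative because $\gamma\geq f_{y}$ combined with the elementary inequality $a_j+d_j\geq d_j-a_j=\triangle d_j$ (immediate from $a_j>0$). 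Hence $B:=-A$ is an $L$-matrix. Moreover the linear part of the scheme annihilates constants, so the row sum of $A$ collapses to
\begin{equation*}
\sum_{j}A_{ij}\;=\;-\,\frac{\triangle d_i\, f_{y}(\xi_{i-1})+\triangle d_{i+1}\, f_{y}(\xi_i)}{\triangle d_i+\triangle d_{i+1}}\;\leq\;-m,
\end{equation*}
so $B\mathbf{e}\geq m\mathbf{e}$ with $\mathbf{e}=(1,\ldots,1)^T$, making $B$ an $M$-matrix that satisfies $\|B^{-1}\|_\infty\leq 1/m$.

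The estimate \eqref{thb7} then follows from the standard barrier argument: on the subspace with fixed boundary values, setting $\phi=\|Fw-Fv\|_\infty/m$, the vectors $\phi\mathbf{e}\pm(w-v)$ are mapped by $B$ into componentwise non-negative vectors, and inverse-monotonicity of $B$ forces $\|w-v\|_\infty\leq\phi$. Uniqueness is immediate. For existence I would invoke the Hadamard-L\'evy theorem---$F'$ is invertible everywhere with the uniform bound $\|(F')^{-1}\|_\infty\leq 1/m$ and $F$ is coercive, which follows from \eqref{thb7} applied with $v=0$---or, equivalently, appeal to the Ortega-Rheinboldt theory of $M$-functions.

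The main technical obstacle is the structural verification: getting the signs of the off-diagonal entries of $A$ right uniformly in the mesh (this is exactly where the assumption $\gamma\geq f_y$ is indispensable) and tracking the cancellation in the row sum that yields the clean factor $m$ rather than something weaker involving the mesh parameters. Once these two facts are in place, the stability estimate, uniqueness, and existence all follow by routine arguments.
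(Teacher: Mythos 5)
The paper contains no proof of this theorem: it is quoted verbatim from \cite{samir2015uniformly}, so there is no internal argument to compare yours against. Judged on its own merits, your proposal is the standard and correct route for Boglaev-type schemes. The mean value theorem applied to the midpoint average gives exactly the factor $\tfrac12 f_y(\xi_k)$ on each of the two neighbouring increments; the off-diagonal sign condition reduces, as you say, to $a_j+d_j\geqslant \triangle d_j$, i.e.\ $a_j>0$, once $\gamma\geqslant f_y$ is used; the linear part of \eqref{thb4} annihilates constants, so the row sum of your matrix $A$ is the weighted average $-\bigl(\triangle d_i\,f_y(\xi_{i-1})+\triangle d_{i+1}\,f_y(\xi_i)\bigr)/\bigl(\triangle d_i+\triangle d_{i+1}\bigr)\leqslant -m$ since $\triangle d_j>0$; and the barrier argument with $\phi\mathbf{e}\pm(w-v)$ then delivers \eqref{thb7}. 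One caveat you should make explicit: with the boundary components defined literally as $\left(F\overline{y}\right)_0:=0$ and $\left(F\overline{y}\right)_N:=0$, as in \eqref{thb5}, the map $F$ is constant in the $0$-th and $N$-th arguments, so it cannot be injective on all of $\mathbb{R}^{N+1}$ and \eqref{thb7} fails for vectors differing only in a boundary component. Your restriction to the subspace of vectors with prescribed boundary values is therefore not a convenience but a necessity; the theorem should be read either with $v_0=w_0$, $v_N=w_N$ imposed or with the boundary rows of $F$ replaced by something like $-\gamma\overline{y}_0$ and $-\gamma\overline{y}_N$. With that reading, your stability estimate, the resulting uniqueness, and the appeal to the Hadamard global inverse function theorem (legitimate here since $f\in C^k$, $k\geqslant 2$, makes $F$ a $C^1$ map with uniformly bounded inverse Jacobian) together establish the theorem.
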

The mesh that will be used here is a modified Shishkin mesh from \cite{linss2010, linss2012approximation}, which has a greater smoothness compared to the generating function. Before the construction of the mesh, we are stating a theorem about the decomposition and estimates of the derivatives, which is necessary for the construction and further analysis.
\begin{theorem}\label{teorema2}{\rm\cite{vulanovic1983numerical}}
The solution $y$ to the problem \eqref{uvod1}--\eqref{uvod3} can be represented in the following way
  \begin{equation}
    y=r+s,
      \label{jed6}
  \end{equation}
where for $i=0,1,\ldots,k$ and $x\in[0,1]$ we have that
  \begin{subequations}
    \begin{equation}
        \left|r^{(i)}(x)\right|\leqslant C,
       \label{jed7a}
    \end{equation}
    \begin{equation}
     \left|s^{(i)}(x)\right|\leqslant C\varepsilon^{-i}\left(e^{-\frac{x}{\varepsilon}\sqrt{m}}+e^{-\frac{1-x}{\varepsilon}\sqrt{m}}\right).
     \label{jed7b}
    \end{equation}
  \end{subequations}
\end{theorem}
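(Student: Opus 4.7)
The plan is to decompose $y = r + s$ by choosing $r$ as the solution of the same ODE but with \emph{non-layer} boundary data, so that $s = y - r$ solves a homogeneous linearised singularly perturbed problem whose boundary values alone drive the exponential layers.

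First I would introduce the reduced solution $r_0 \in C^k([0,1])$, defined as the unique root of the algebraic equation $f(x, r_0(x)) = 0$; existence and smoothness follow from the implicit function theorem, which applies because $f_y \geqslant m > 0$ and $f \in C^k$. Then I would define $r$ as the unique solution on $[0,1]$ of
\begin{equation*}
  \varepsilon^2 r''(x) = f(x, r(x)), \qquad r(0) = r_0(0), \quad r(1) = r_0(1).
\end{equation*}
This boundary value problem is uniquely solvable by the same monotonicity argument used for \eqref{uvod1}--\eqref{uvod3}; the point of selecting these particular boundary values is that, since they already lie on the reduced manifold, no boundary layer forms. Matching a formal expansion $r \sim \sum_{j=0}^{K} \varepsilon^{2j} r_j$ (with $r_j$ determined recursively by differentiating the identity $f(x, r) = \varepsilon^2 r''$ in powers of $\varepsilon^2$) and bounding the remainder by the maximum principle yields $\|r^{(i)}\|_\infty \leqslant C$ uniformly in $\varepsilon$ for $i = 0, 1, \ldots, k$, which is \eqref{jed7a}.

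Next, setting $s := y - r$ and subtracting the two ODEs, the mean-value theorem gives
\begin{equation*}
  \varepsilon^2 s''(x) - q(x)\, s(x) = 0, \qquad q(x) := \int_0^1 f_y\bigl(x, r(x) + t\, s(x)\bigr)\,dt \geqslant m,
\end{equation*}
with boundary data $s(0) = -r_0(0)$ and $s(1) = -r_0(1)$, both $\mathcal{O}(1)$. The comparison principle for the operator $L u := \varepsilon^2 u'' - q u$ against the barrier
\begin{equation*}
  \Phi(x) = C\!\left(e^{-x\sqrt{m}/\varepsilon} + e^{-(1-x)\sqrt{m}/\varepsilon}\right),
\end{equation*}
which satisfies $L\Phi = (m - q)\Phi \leqslant 0$, then yields $|s(x)| \leqslant \Phi(x)$ once $C$ is chosen large enough to dominate $|r_0(0)|$ and $|r_0(1)|$, proving the $i = 0$ case of \eqref{jed7b}.

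Derivative estimates for $i \geqslant 1$ can be obtained either by a local rescaling argument $\xi = x/\varepsilon$ combined with interior $L^\infty$ bounds in the stretched variable, or by differentiating the identity $\varepsilon^2 s'' = q s$ and re-applying the comparison principle to $s^{(i)}$; in both cases one picks up the factor $\varepsilon^{-i}$ in the envelope, and an induction supplies the bounds on lower-order derivatives needed to control the coefficients produced by each differentiation. The main obstacle lies in the first step: proving that the BVP for $r$ with non-layer boundary data admits a solution whose derivatives up to order $k$ remain bounded independently of $\varepsilon$ requires careful analysis of the formal asymptotic expansion and of its $C^k$-remainder, and this is the technical heart of the argument.
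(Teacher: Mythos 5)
The paper itself offers no proof of this theorem --- it is quoted verbatim from \cite{vulanovic1983numerical} --- and your overall strategy (reduced solution $r_0$ from $f(x,r_0)=0$, a formal expansion in powers of $\varepsilon^2$ for the regular part, and barrier functions $C\bigl(e^{-x\sqrt{m}/\varepsilon}+e^{-(1-x)\sqrt{m}/\varepsilon}\bigr)$ with the comparison principle for the layer part) is the standard one used in that line of work. There is, however, a concrete gap in your construction of $r$. If you impose $r(0)=r_0(0)$ and $r(1)=r_0(1)$, the truncated expansion $r^{as}=r_0+\varepsilon^2 r_1+\cdots+\varepsilon^{2M}r_M$ does \emph{not} satisfy these boundary conditions: the mismatch at $x=0$ is $\varepsilon^2 r_1(0)+\cdots=\mathcal{O}(\varepsilon^2)$. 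Hence $r-r^{as}$ is not merely a smooth $\mathcal{O}(\varepsilon^{2M+2})$ remainder; it contains boundary-layer terms of amplitude $\mathcal{O}(\varepsilon^{2})$, so $r$ itself carries a weak layer of the form $\varepsilon^{2}e^{-x\sqrt{m}/\varepsilon}$. The $i$-th derivative of such a term is of size $\varepsilon^{2-i}$, which is bounded only for $i\leqslant 2$. Thus your $r$ satisfies \eqref{jed7a} for $k=2$ but fails it for every $k\geqslant 3$, whereas the theorem is asserted for all $i=0,\ldots,k$. The standard repair is to prescribe the boundary data of $r$ from a sufficiently high-order truncation, $r(0)=\sum_{j=0}^{M}\varepsilon^{2j}r_j(0)$ and likewise at $x=1$ with $2M+2\geqslant k$ (or to take $r$ to be the truncated expansion itself and absorb the small smooth remainder separately); one must then also track regularity, since each $r_j$ costs two derivatives of $r_{j-1}$ and of $f$, which is exactly where the hypothesis $f\in C^{k}$ enters.

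A second, smaller issue: for $i\geqslant 1$ you cannot simply ``re-apply the comparison principle to $s^{(i)}$'' after differentiating $\varepsilon^2 s''=qs$, because you have no boundary values for $s^{(i)}$ at $x=0,1$. The usual route is: the equation itself gives $|s''|\leqslant C\varepsilon^{-2}\bigl(e^{-x\sqrt{m}/\varepsilon}+e^{-(1-x)\sqrt{m}/\varepsilon}\bigr)$ directly from the $i=0$ bound; the bound on $s'$ is then obtained by picking a mean-value point $\xi\in[x,x+\varepsilon]$ where $|s'(\xi)|\leqslant \varepsilon^{-1}\bigl(|s(x)|+|s(x+\varepsilon)|\bigr)$ and integrating $s''$ back from $\xi$ to $x$; higher derivatives follow by differentiating the equation and induction. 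Your rescaling alternative $\xi=x/\varepsilon$ can be made to work, but as written neither variant is justified.
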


\noindent Let  $N+1$ be the number of mesh points, $q\in(0,1/2)$ and $\sigma>0$ be the mesh parameter. We will define the transition point of the Shishkin mesh with
\begin{equation*}
  \lambda:=\min\left\{ \frac{\sigma\varepsilon}{\sqrt{m}}\ln N,q\right\}.
\label{thb8}
\end{equation*}
Let $\sigma=2.$
\begin{remark} For  the sake of simplicity in representation, we assume that $\lambda=2\varepsilon (\sqrt{m})^{-1}\ln N$, as otherwise the problem can be analysed in the classical way. We shall also assume that $q N$ is an integer. This is easily achieved by choosing $q=1/4$ and $N$ divisible by $4$ for example.
\end{remark}

\noindent The mesh $\triangle:x_0<x_1<...<x_N$ is generated by $x_i=\varphi(i/N)$ with the mesh generating function
\begin{equation}
\varphi(t):=\left\{
              \begin{array}{ll}
                 \tfrac{\lambda}{q}t &t\in[0,q],\\
                 p(t-q)^3  +\lambda          &t\in[q,1/2],\\
                 1-\varphi(1-t)         &t\in[1/2,1],
              \end{array}
      \right.
\label{thb9}
\end{equation}
where $p$  is chosen so that $\varphi(1/2)=1/2,$ i.e. $p=\tfrac{1}{2}(1-\tfrac{\lambda}{q})(\tfrac{1}{2}-q)^{-3}.$ Note that $\varphi\in C^{1}[0,1]$ with $\left\|\varphi'\right\|_{\infty},\left\|\varphi''\right\|_{\infty}\leq C.$ Therefore the mesh sizes $h_{i}=x_{i+1}-x_{i},\,i=0,1,...,N-1$ satisfy
\begin{equation}
  h_i\leqslant \frac{C}{N} \text{\quad and \quad}   |h_{i+1}-h_i|\leqslant \frac{C}{N^{2}},
\label{thb10}
\end{equation}
see \cite{linss2012approximation} for details.

\begin{theorem}{\rm \cite{samir2015uniformly}} The difference scheme \eqref{thb4} on the mesh generated by the function \eqref{thb9}  is uniformly convergent with respect to $\varepsilon$ and
\begin{equation*}
\max\limits_{0\leq i\leq N}\left|y(x_i)-\overline{y}_i\right|\leq C\dfrac{\ln^2 N}{N^2},
\label{thb11}
\end{equation*}
where $y(x)$ is the solution of the problem \eqref{uvod1}--\eqref{uvod3}, $\overline{y}$ is the corresponding numerical solution of \eqref{thb5}, and $C>0$ is a constant independent of $N$ and $\varepsilon$.
\end{theorem}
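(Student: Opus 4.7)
The plan is to use the stability inequality (\ref{thb7}) of Theorem~2.1 to reduce the convergence analysis to a bound on the consistency error. Applying (\ref{thb7}) with $w=\overline{y}$ and $v=(y(x_0),y(x_1),\ldots,y(x_N))^T$, and using $F\overline{y}=0$, gives
\[
\max_{0\le i\le N}|y(x_i)-\overline{y}_i| \;\leq\; \frac{1}{m}\|Fy\|_\infty,
\]
so it suffices to show that $\|Fy\|_\infty \leq C\ln^2 N/N^2$.

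To bound the local truncation error $(Fy)_i$, I would start from the exact integral representation (\ref{thb2}) on the two subintervals adjacent to $x_i$ and couple them through the continuity of $y'$ at $x_i$ (or, equivalently, through the ansatz used in \cite{samir2015uniformly} to derive (\ref{thb4})). This expresses $(Fy)_i$, up to the normalizing factor $\gamma/(\triangle d_i+\triangle d_{i+1})$, as a quadrature remainder: the difference between the exact Green's-function-weighted integrals of $\psi(s,y(s))$ over $[x_{i-1},x_i]\cup[x_i,x_{i+1}]$ and the midpoint evaluations of $\overline{f}$ appearing on the right of (\ref{thb4}). A Taylor expansion of $\psi(\cdot,y(\cdot))$ about the midpoints of the two subintervals, in which the zeroth- and first-order contributions match by construction of the quadrature, then yields the standard pointwise estimate
\[
|(Fy)_i| \;\leq\; C\bigl(h_i^2+h_{i+1}^2\bigr)\max_{[x_{i-1},x_{i+1}]}\!\bigl|(\psi\circ y)''(x)\bigr|.
\]

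Next I would invoke the Shishkin-type decomposition $y=r+s$ from Theorem~\ref{teorema2}. Since $|r''|\leq C$ by (\ref{jed7a}), the above bound combined with (\ref{thb10}) gives $|(Fr)_i|\leq C/N^2$ for every $i$. For the layer part $s$ I would split the index range according to the three regions of the mesh (\ref{thb9}). In the coarse interior region $[\lambda,1-\lambda]$ the choice $\sigma=2$ together with (\ref{jed7b}) yields $|s(x)|\leq CN^{-2}$ pointwise, so bounding every term of $(Fs)_i$ in absolute value and using that $a_j,d_j,\triangle d_j$ and $(\triangle d_i+\triangle d_{i+1})^{-1}$ are uniformly $O(1)$ there produces $|(Fs)_i|\leq CN^{-2}$. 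In the fine regions $[0,\lambda]$ and $[1-\lambda,1]$ the mesh is uniform with step $h=\lambda/(qN)=O(\varepsilon\ln N/N)$, and $|(\psi\circ s)''|\leq C\varepsilon^{-2}$ combined with the truncation bound above yields $|(Fs)_i|\leq Ch^2\varepsilon^{-2}=C\ln^2 N/N^2$. A short separate argument, relying on $|h_{i+1}-h_i|\leq C/N^2$ from (\ref{thb10}), then handles the single transition index $i=qN$ where the mesh changes character.

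The main obstacle I anticipate is the uniform control of the exponentially-fitted coefficients. Since $\beta=\sqrt{\gamma}/\varepsilon$, the argument $\beta h_i$ ranges from values of order $\ln N$ on the coarse mesh down to values of order $\sqrt{\gamma}\,h/\varepsilon$ on the fine mesh, which can itself be small or large depending on the relative sizes of $\varepsilon$ and $N^{-1}$. Different asymptotic expansions of $\sinh(\beta h_i)$ and $\tanh(\beta h_i)$ must be used in each regime in order to extract uniform constants in the truncation bound and to verify that the normalizing factor $(\triangle d_i+\triangle d_{i+1})^{-1}$ does not blow up. The improved mesh smoothness $|h_{i+1}-h_i|\leq C/N^2$ of the modified Shishkin mesh (\ref{thb9}), which fails on the standard Shishkin mesh, is precisely what prevents an additional logarithmic loss at the transition point and allows one to close the argument at the sharp rate $O(\ln^2 N/N^2)$.
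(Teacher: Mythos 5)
The paper does not actually prove this theorem: it is imported verbatim from \cite{samir2015uniformly}, and the only ingredient reproduced here is the stability inequality \eqref{thb7}, so there is no in-paper proof to compare against. Your plan --- reduce to the consistency error via \eqref{thb7} with $F\overline{y}=0$, decompose $y=r+s$ as in Theorem \ref{teorema2}, and estimate the residual separately on the fine, coarse and transition parts of the mesh \eqref{thb9} using \eqref{thb10} --- is the standard route for such fitted schemes and is exactly the machinery the paper sets up, so I regard it as essentially the intended argument. One caution on a detail you flag but resolve in the wrong direction: on the fine part of the mesh $\triangle d_i+\triangle d_{i+1}\sim\beta h$ is \emph{small}, so the normalizing factor $\gamma/(\triangle d_i+\triangle d_{i+1})$ in \eqref{thb5} does blow up like $(\beta h)^{-1}$ there; the argument closes only because the unnormalized quadrature remainder of \eqref{thb4} carries a compensating factor of $\beta h$, which is what must actually be verified rather than boundedness of the normalizer.
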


\section{Main results}

\noindent On the interval  $[x_i,x_{i+1}]$ using the representation \eqref{thb2}, we look for an approximate solution in the following form
\begin{equation}
 \tilde{y}_i(x)= \overline{y}_iu_i^{I}(x)+\overline{y}_{i+1}u_i^{II}(x)+\overline{\psi}_i\int_{x_i}^{x_{i+1}}{G_i(x,s)\dif s},\:i=0,\ldots,N-1,
\label{mr2}
\end{equation}
where
\begin{equation}
  \overline{\psi}_i=\psi((x_i+x_{i+1})/2,(\overline{y}_i+\overline{y}_{i+1})/2),\:i=0,1,\ldots,N-1.
 \label{mr1}
\end{equation}
We obtain that it is
\begin{multline}
 \int_{x_i}^{x_{i+1}}{G_i(x,s)\dif s}=-\frac{\sinh(\beta(x_{i+1}-x))}{\gamma\sinh(\beta h_i)}\left[ \cosh(\beta(x-x_i))-1\right]\\
                                                   - \frac{\sinh(\beta(x-x_i))}{\gamma\sinh(\beta h_i)}\left[ \cosh(\beta(x_{i+1}-x))-1\right],\:i=0,\ldots,N-1.
\label{mr3}
\end{multline}

\noindent We are looking for an approximate solution on $[0,1]$ in the form
\begin{equation}
 \Bigl.Y(x)\Bigr|_{[x_i,x_{i+1}]}=\tilde{y}_i(x),\:i=0,\ldots,N-1.
 \label{aprox}
\end{equation}

\noindent Using the maximum norm, we estimate the difference between the exact solution of the problem \eqref{uvod1}--\eqref{uvod3} and approximate solutions given by \eqref{aprox}.
This difference will be estimated on each interval $[x_i,x_{i+1}],\:i=0,\ldots,N-1.$ Taking into account \eqref{thb2}, \eqref{mr2} and  \eqref{aprox}, we have that
\begin{multline}
   \left|y_i(x)-\tilde{y}_i(x)\right|\leqslant\left|y_i-\overline{y}_i\right|\left|u^{I}_{i}(x)\right|+\left|y_{i+1}-\overline{y}_{i+1}\right|\left|u^{II}_i(x)\right|\\
    +\left|\int_{x_i}^{x_{i+1}}{  G_i(x,s)\left( \psi(s,y(s))-\overline{\psi}_i\right) \dif s}\right|,\:i=0,\ldots,N-1.
 \label{mr5}
\end{multline}
\begin{remark}\label{remark1}
An estimate of the value of difference $\left|y(x)-Y(x)\right|,\:\forall x\in[0,1],$ or estimate of the error will be done for $[0,1/2].$ An analogue estimate would hold on $[1/2,1].$

\noindent Note that $e^{-x\sqrt{m}/\varepsilon}\geqslant e^{-(1-x)\sqrt{m}/\varepsilon}$ and $h_{i+1}\geqslant h_i$ for $x\in[0,1/2]$ and $e^{-x\sqrt{m}/\varepsilon}\leqslant e^{-(1-x)\sqrt{m}/\varepsilon}$ and $h_{i+1}\leqslant h_i$ for $x\in[1/2,1].$
\end{remark}

\noindent Let us first estimate $\displaystyle\int_{x_i}^{x_{i+1}}{G_i(x,s)\dif s}$ for $x\in[0,\lambda].$

\begin{lemma}\label{lemaeks1}
For  $x\in[x_i,x_{i+1}],\:i=0,\ldots,N/4-1,$ we have the following estimate
\begin{equation}
\left| \frac{\sinh(\beta(x_{i+1}-x))}{\gamma\sinh(\beta h_i)}\left[ \cosh(\beta(x-x_i))-1\right]\right.
           \left. + \frac{\sinh(\beta(x-x_i))}{\gamma\sinh(\beta h_i)}\left[ \cosh(\beta(x_{i+1}-x))-1\right]\right|\leqslant\frac{C\ln^2N}{N^2}.
 \label{lemaeks2}
\end{equation}
\end{lemma}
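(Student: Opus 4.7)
The plan is to reduce the bracketed expression to a compact form via the standard identity $\sinh(u+v)=\sinh u\cosh v+\cosh u\sinh v$ and then exploit the fact that on $[0,\lambda]$ the mesh is uniform with a small (but $\varepsilon$-independent) step size. Concretely, on $[0,\lambda]$ we have $h_i=\lambda/(qN)$ for $i=0,\ldots,N/4-1$, hence
\begin{equation*}
\beta h_i=\frac{\sqrt m}{\varepsilon}\cdot\frac{2\varepsilon\ln N}{\sqrt m\, qN}=\frac{2\ln N}{qN},
\end{equation*}
which is bounded and of order $\ln N/N$. This is the one quantitative fact we actually need.

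Next I would set $u:=\beta(x_{i+1}-x)\geqslant 0$ and $v:=\beta(x-x_i)\geqslant 0$, so that $u+v=\beta h_i$ and both $u,v\in[0,\beta h_i]$. Combining the two terms inside the modulus and using the addition formula, their numerator collapses:
\begin{equation*}
\sinh u\,(\cosh v-1)+\sinh v\,(\cosh u-1)=\sinh(u+v)-\sinh u-\sinh v\geqslant 0,
\end{equation*}
since $\sinh$ is convex on $[0,\infty)$. Therefore the expression in \eqref{lemaeks2} equals
\begin{equation*}
\frac{1}{\gamma}\left[1-\frac{\sinh u+\sinh v}{\sinh(u+v)}\right],
\end{equation*}
which is already manifestly non-negative and at most $1/\gamma$.

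The quantitative estimate then follows from two elementary hyperbolic inequalities: $\sinh t\geqslant t$ for $t\geqslant 0$, and $\sinh t-t\leqslant \tfrac{t^3}{6}\cosh t$ for $t\geqslant 0$ (which is immediate by integrating $\cosh t-1\leqslant \tfrac{t^2}{2}\cosh t$ twice, or from the Taylor series). Applying these with $t=u+v$ and using $\sinh u\geqslant u$, $\sinh v\geqslant v$ in the numerator, and $\sinh(u+v)\geqslant u+v$ in the denominator, I get
\begin{equation*}
\sinh(u+v)-\sinh u-\sinh v\leqslant \sinh(u+v)-(u+v)\leqslant\frac{(u+v)^3}{6}\cosh(u+v),
\end{equation*}
so the bracketed ratio is dominated by $\tfrac{(u+v)^2}{6}\cosh(u+v)=\tfrac{(\beta h_i)^2}{6}\cosh(\beta h_i)$. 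Since $\beta h_i=2\ln N/(qN)\leqslant C$ for all large $N$, the factor $\cosh(\beta h_i)$ is bounded, and
\begin{equation*}
\frac{(\beta h_i)^2}{6\gamma}\cosh(\beta h_i)\leqslant \frac{C\ln^2 N}{N^2},
\end{equation*}
which is exactly the required estimate.

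The only place that needs care is showing the auxiliary inequality $\sinh t-t\leqslant t^3\cosh t/6$ (or some equivalent substitute), but this is a one-line calculus exercise; the main structural step is the algebraic collapse of the two Green's-function terms into $\sinh(u+v)-\sinh u-\sinh v$, after which everything is driven by the bound $\beta h_i=O(\ln N/N)$ coming from the uniform part of the Shishkin mesh.
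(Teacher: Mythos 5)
Your proof is correct and follows essentially the same route as the paper's: both collapse the two Green's-function terms via the addition formula to $\bigl(\sinh(\beta h_i)-\sinh(\beta(x_{i+1}-x))-\sinh(\beta(x-x_i))\bigr)/\bigl(\gamma\sinh(\beta h_i)\bigr)$ and then exploit $\beta h_i=\mathcal{O}(\ln N/N)$ on the fine part of the mesh, the only difference being that you finish with the clean inequalities $\sinh t\geqslant t$ and $\sinh t-t\leqslant \tfrac{t^3}{6}\cosh t$ where the paper Taylor-expands with $\mathcal{O}(\beta^5h_i^5)$ remainders. The only (harmless) slip is writing $\beta=\sqrt{m}/\varepsilon$ instead of $\beta=\sqrt{\gamma}/\varepsilon$, which changes $\beta h_i$ only by the fixed factor $\sqrt{\gamma/m}$ and does not affect the bound.
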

\begin{proof}
\begin{align*}
&\frac{\sinh(\beta(x_{i+1}-x))}{\gamma\sinh(\beta h_i)}\left[ \cosh(\beta(x-x_i))-1\right]
+ \frac{\sinh(\beta(x-x_i))}{\gamma\sinh(\beta h_i)}\left[ \cosh(\beta(x_{i+1}-x))-1\right]    \\
&=\frac{\sinh(\beta (x_{i+1}-x_i))-\sinh(\beta(x_{i+1}-x))-\sinh(\beta(x-x_i))}{\gamma\sinh(\beta h_i)}                                            \\
&=\frac{\beta h_i+\frac{\beta^3h_i^3}{6}+\mathcal{O}_1\left(\beta^5h^5_i \right)
           -\beta(x_{i+1}-x)-\frac{\beta^3(x_{i+1}-x)^3}{6}-\mathcal{O}_2\left(\beta^5(x_{i+1}-x)^5 \right)}
       {\gamma\left[ \beta h_i+\frac{\beta^3h^3_i}{6}+\mathcal{O}_1\left(\beta^5h^5_i\right)\right]}                  \\
&\hspace{.5cm} -\frac{\beta(x-x_i)+\frac{\beta^3(x-x_i)^3}{6}+\mathcal{O}_3\left(\beta^5(x-x_i)^5 \right)}
         {\gamma\left[ \beta h_i+\frac{\beta^3h^3_i}{6}+\mathcal{O}_ 1\left(\beta^5h^5_i\right)\right]}
\end{align*}
\begin{align*}
&=\frac{\frac{1}{2}\beta^3(x-x_i)(x-x_{i+1})(x_i-x_{i+1})}
              {\gamma\left[ \beta h_i+\frac{\beta^3h^3_i}{6}+\mathcal{O}_1\left(\beta^5h^5_i\right)\right]}  \\
&\hspace{3.35cm}  +\frac{\mathcal{O}_1\left(\beta^5h^5_i \right)-\mathcal{O}_2\left(\beta^5(x_{i+1}-x)^5 \right)-\mathcal{O}_3\left(\beta^5(x-x_i)^5 \right)}
              {\gamma\left[ \beta h_i+\frac{\beta^3h^3_i}{6}+\mathcal{O}_1\left(\beta^5h^5_i\right)\right]}   .
\end{align*}
Furthermore, based on the value of parameter $\beta$ and the properties of the mesh, we have that
\begin{multline}
 \left|\frac{\frac{1}{2}\beta^3(x-x_i)(x-x_{i+1})(x_i-x_{i+1})}
              {\gamma\left[ \beta h_i+\frac{\beta^3h^3_i}{6}+\mathcal{O}_1\left(\beta^5h^5_i\right)\right]} \right.\\
      +\left.\frac{\mathcal{O}_1\left(\beta^5h^5_i \right)-\mathcal{O}_2\left(\beta^5(x_{i+1}-x)^5 \right)-\mathcal{O}_3\left(\beta^5(x-x_i)^5 \right)}
              {\gamma\left[ \beta h_i+\frac{\beta^3h^3_i}{6}+\mathcal{O}_1\left(\beta^5h^5_i\right)\right]} \right| \\
         \leqslant C_1\frac{\frac{\ln^3N}{N^3}+\frac{\ln^5N}{N^5}}{\frac{\ln N}{N}}\leqslant\frac{C\ln^2 N}{N^2} .
 \label{eksp6}
\end{multline}
Now, using \eqref{eksp6}, we obtain \eqref{lemaeks2}.
\end{proof}

\begin{lemma}\label{lemaeksp3}
For $x\in[x_i,x_{i+1}],\:i=N/4,\ldots,N/2-1,$ we have the following estimate
\begin{equation}
 \left| \frac{\sinh(\beta(x_{i+1}-x))}{\gamma\sinh(\beta h_i)}\left[ \cosh(\beta(x-x_i))-1\right]\right.
           \left. + \frac{\sinh(\beta(x-x_i))}{\gamma\sinh(\beta h_i)}\left[ \cosh(\beta(x_{i+1}-x))-1\right]\right|\leqslant C.
 \label{ekps7}
\end{equation}
\end{lemma}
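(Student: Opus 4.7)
The plan is to recognise that in this mesh region we cannot repeat the Taylor-expansion strategy used for Lemma \ref{lemaeks1}: for $i \geq N/4$ the step sizes $h_i$ are of order $1/N$ (from the coarse part of the cubic generating function), but $\beta h_i = \sqrt{\gamma}\, h_i/\varepsilon$ may be arbitrarily large, so expanding $\sinh$ and $\cosh$ around zero is invalid. Fortunately the target bound here is only a constant $C$, not $C\ln^2 N/N^2$, which suggests exploiting the algebraic structure of the hyperbolic functions globally rather than any smallness.

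First I would rewrite the expression by using the addition formula
\[
\sinh(\beta(x_{i+1}-x))\cosh(\beta(x-x_i)) + \cosh(\beta(x_{i+1}-x))\sinh(\beta(x-x_i)) = \sinh(\beta h_i),
\]
applied with $A=\beta(x_{i+1}-x)$ and $B=\beta(x-x_i)$, since $A+B = \beta h_i$. Subtracting off the ``$-1$'' terms then collapses the left-hand side of \eqref{ekps7} into
\[
\frac{1}{\gamma} \;-\; \frac{\sinh(\beta(x_{i+1}-x)) + \sinh(\beta(x-x_i))}{\gamma\,\sinh(\beta h_i)}.
\]
This is the same algebraic simplification that the authors already implicitly used at the very start of the proof of Lemma \ref{lemaeks1}, but here I would stop before expanding the $\sinh$ terms in series.

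Next I would show that the ratio on the right lies in $[0,1]$. Non-negativity is immediate since $x \in [x_i,x_{i+1}]$ makes all arguments non-negative. For the upper bound, since $\sinh$ is convex on $[0,\infty)$, for any $a,b \geq 0$ with $a+b = \beta h_i$ the function $a \mapsto \sinh(a) + \sinh(\beta h_i - a)$ attains its maximum at the endpoints of $[0,\beta h_i]$, giving $\sinh(a) + \sinh(b) \leq \sinh(\beta h_i)$. Applying this with $a = \beta(x_{i+1}-x)$ and $b = \beta(x-x_i)$ yields that the ratio is at most $1$. Consequently the whole expression lies in $[0, 1/\gamma]$, so its absolute value is bounded by $C := 1/\gamma$, independently of $i$, $x$, $N$, and $\varepsilon$.

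There is no real obstacle here — the crux is simply spotting that the bound we need is crude enough to be handled by the monotonicity/convexity of $\sinh$, so that we avoid the divergence of the Taylor remainders that was harmless inside the layer but would be catastrophic here. The only mildly delicate point is justifying the convexity argument, which follows immediately by inspecting the derivative $\cosh(a) - \cosh(\beta h_i - a)$ and noting it vanishes at $a = \beta h_i/2$ with boundary maxima.
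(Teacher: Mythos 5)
Your proof is correct and takes essentially the same route as the paper: both first collapse the expression via the addition formula to $\bigl(\sinh(\beta h_i)-\sinh(\beta(x_{i+1}-x))-\sinh(\beta(x-x_i))\bigr)/\bigl(\gamma\sinh(\beta h_i)\bigr)$, and then bound the remaining ratios. The paper finishes with the triangle inequality and monotonicity of $\sinh$ (yielding the constant $3/\gamma$), while your convexity argument sharpens this to $1/\gamma$ --- a cosmetic improvement, not a different approach.
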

\begin{proof}
In the proof of the Lemma \ref{lemaeks1}, it is shown that
\begin{multline}
\frac{\sinh(\beta(x_{i+1}-x))}{\gamma\sinh(\beta h_i)}\left[ \cosh(\beta(x-x_i))-1\right]\\
                                                   + \frac{\sinh(\beta(x-x_i))}{\gamma\sinh(\beta h_i)}\left[ \cosh(\beta(x_{i+1}-x))-1\right]\\
=  \frac{\sinh(\beta (x_{i+1}-x_i))-\sinh(\beta(x_{i+1}-x))-\sinh(\beta(x-x_i))}{\gamma\sinh(\beta h_i)} .
\end{multline}
We get that
\begin{multline}
  \left| \frac{\sinh(\beta (x_{i+1}-x_i))-\sinh(\beta(x_{i+1}-x))-\sinh(\beta(x-x_i))}{\gamma\sinh(\beta h_i)}\right|\\
  \leqslant\frac{1}{\gamma}\left(1+\left|\frac{\sinh(\beta(x_{i+1}-x))}{\sinh(\beta h_i)}\right|+\left| \frac{\sinh(\beta(x-x_i))}{\sinh(\beta h_i)}\right| \right)
  \leqslant C.
 \label{eksp8}
\end{multline}
\end{proof}

\begin{theorem}\label{th1}
Let  $y$ be the exact solution of the problem \eqref{uvod1}--\eqref{uvod3},  and $Y$ be the appropriate approximate solution given in \eqref{aprox}. We have the following estimate
\begin{equation}
   \max_x\left|y(x)-Y(x)\right|\leqslant C
      \left\{ \begin{array}{cl}
          \dfrac{\ln^2N}{N^2},&\:x\in[0,\lambda],\\\\
          \dfrac{1}{N},&\:x	\in[\lambda,1-\lambda],\\\\
          \dfrac{\ln^2N}{N^2},&\:x\in [1-\lambda,1],
       \end{array}\right.
 \label{th2}
\end{equation}
where the constant $C$ does not depend on the perturbation parameter $\varepsilon$ nor $N.$
\end{theorem}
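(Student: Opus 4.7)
\medskip

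\noindent\textbf{Proof plan for Theorem \ref{th1}.} The plan is to start from the pointwise inequality \eqref{mr5} on each subinterval $[x_i,x_{i+1}]$ and estimate its three summands separately, distinguishing whether $[x_i,x_{i+1}]$ lies in the fine part $[0,\lambda]$, the coarse part $[\lambda,1-\lambda]$, or the symmetric fine part $[1-\lambda,1]$. For the first two summands, one checks via the identity $u_i^I(x)+u_i^{II}(x)=\cosh(\beta(x_i+x_{i+1}-2x)/2)/\cosh(\beta h_i/2)$ that $0\leqslant u_i^I(x),u_i^{II}(x)\leqslant 1$ on $[x_i,x_{i+1}]$; combined with Theorem 2.3, this gives
\begin{equation*}
 |y_i-\overline{y}_i|\,|u_i^I(x)|+|y_{i+1}-\overline{y}_{i+1}|\,|u_i^{II}(x)|\leqslant C\frac{\ln^2 N}{N^2}
\end{equation*}
uniformly in $x$ and $i$, which is already below the bound claimed on every subinterval, so only the integral term has to be treated differently in the three regions.

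\medskip

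\noindent For $x\in[0,\lambda]$ (respectively $x\in[1-\lambda,1]$, by the symmetry of the mesh generator \eqref{thb9}) the integral term is handled by pulling the sup out:
\begin{equation*}
\left|\int_{x_i}^{x_{i+1}}G_i(x,s)(\psi(s,y(s))-\overline{\psi}_i)\,\dif s\right|
\leqslant \max_{s\in[x_i,x_{i+1}]}|\psi(s,y(s))-\overline{\psi}_i|\,\cdot\,\left|\int_{x_i}^{x_{i+1}}G_i(x,s)\,\dif s\right|.
\end{equation*}
Boundedness of $y$ via Theorem \ref{teorema2}, the continuity of $f$ on a compact set, and the boundedness of $\overline{y}_i$ inherited from Theorem 2.3, yield $|\psi(s,y(s))-\overline{\psi}_i|\leqslant C$. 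Combining with the explicit formula \eqref{mr3} and Lemma \ref{lemaeks1} (respectively its mirror on $[1-\lambda,1]$) gives $C\ln^2 N/N^2$, as required.

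\medskip

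\noindent The main work is on the coarse region $x\in[\lambda,1-\lambda]$, where Lemma \ref{lemaeksp3} (and its symmetric counterpart) only gives the weaker bound $|\int G_i(x,s)\,\dif s|\leqslant C$, so one must show the sharper estimate $|\psi(s,y(s))-\overline{\psi}_i|\leqslant C/N$. Writing
\begin{equation*}
\psi(s,y(s))-\overline{\psi}_i=[\psi(s,y(s))-\psi(\tfrac{x_i+x_{i+1}}{2},\tfrac{y_i+y_{i+1}}{2})]+[\psi(\tfrac{x_i+x_{i+1}}{2},\tfrac{y_i+y_{i+1}}{2})-\overline{\psi}_i]
\end{equation*}
and applying the mean value theorem to $f$ and to $\gamma y$, the second bracket is controlled by $\tfrac{1}{2}(|y_i-\overline{y}_i|+|y_{i+1}-\overline{y}_{i+1}|)\leqslant C\ln^2N/N^2$ via Theorem 2.3. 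For the first bracket one needs $|s-\tfrac{x_i+x_{i+1}}{2}|\leqslant h_i/2\leqslant C/N$ from \eqref{thb10} and $|y(s)-\tfrac{y_i+y_{i+1}}{2}|\leqslant C/N$; the latter is the crucial step and is proved by splitting $y=r+s$ as in Theorem \ref{teorema2} so that $\int_{x_i}^{x_{i+1}}|r'|\,\dif t\leqslant Ch_i\leqslant C/N$ and
\begin{equation*}
\int_{x_i}^{x_{i+1}}|s'(t)|\,\dif t\leqslant C\left(e^{-x_i\sqrt{m}/\varepsilon}+e^{-(1-x_{i+1})\sqrt{m}/\varepsilon}\right)\leqslant CN^{-2},
\end{equation*}
using the fact that on $[\lambda,1-\lambda]$ both exponentials are bounded by $N^{-2}$. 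Multiplying the resulting $C/N$ bound on $|\psi-\overline{\psi}_i|$ by the constant bound on $|\int G_i|$ yields the $C/N$ contribution, and the three regional bounds assemble into \eqref{th2}. The sole real obstacle is this sharper midpoint-type estimate on the coarse interval, where one must exploit the smallness of the layer component rather than the (large) size of its derivatives.
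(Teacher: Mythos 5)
Your proposal is correct and follows essentially the same route as the paper: the starting inequality \eqref{mr5}, Lemmas \ref{lemaeks1} and \ref{lemaeksp3} for the Green's-function integral, and the sharper bound $\left|\psi(s,y(s))-\overline{\psi}_i\right|\leqslant C/N$ on the coarse region obtained from the decomposition $y=r+s$ together with the smallness of the layer component beyond $\lambda$. The only cosmetic differences are that in the layer region you use the crude bound $\left|\psi-\overline{\psi}_i\right|\leqslant C$ (the paper proves $C\ln N/N$ there, which is more than is needed once Lemma \ref{lemaeks1} is available), and that you absorb the transition interval $[x_{N/4},x_{N/4+1}]$ into the general coarse-region argument, whereas the paper treats it separately using $\varepsilon^2y''=f(x,y)$.
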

\begin{proof}
We divide $[0,1]$ by the mesh points $x_i,\,i=1,\ldots,N-1$ into subintervals $[x_i,x_{i+1}],\,i=0,\ldots,N$. Since $Y(x)=\tilde{y}_i(x)$ on $[x_i,x_{i+1}],$
we estimate the difference  $\left|y(x)-\tilde{y}_i(x)\right|$ on each subinterval $[x_i,x_{i+1}]$. Based on representations of the exact solution \eqref{thb2} and the approximate solution \eqref{mr2} on the interval $[x_i,x_{i+1}],$ we have that the estimate \eqref{mr5} holds and
\begin{equation}
  \left| y_i(x)-\tilde{y}_i(x)\right|
            \leqslant  \left|y_i-\overline{y}_i\right|u^{I}_i(x)+\left|y_{i+1}-\overline{y}_{i+1}\right|u^{II}_i(x) +\left|\int_{x_i}^{x_{i+1}}{G_i(x,s)\left[\psi(s,y(s))- \overline{\psi}_i\right]\dif s} \right|.
 \label{th3}
\end{equation}


\noindent Let us first estimate the difference  $\psi(x,y(x))- \overline{\psi}_i$ on the interval $[x_i,x_{i+1}],\:i=0,\ldots,N/4-1$, which appears in the integrand in \eqref{th3}. Using Lagrange's theorem we obtain
\begin{align}
  \left|\psi(x,y(x))- \overline{\psi}_i\right|=&\left|f(x,y(x))-f\left(\tfrac{x_i+x_{i+1}}{2},\tfrac{\overline{y}_i+\overline{y}_{i+1}}{2}\right)
              -\gamma\left(y(x)-\frac{\overline{y}_i+\overline{y}_{i+1}}{2} \right)\right|  \nonumber\\
    =&\left|\left(\frac{\partial f(\xi,\eta)}{\partial y}-\gamma\right)\left(y(x)-\frac{\overline{y}_i+\overline{y}_{i+1}}{2}\right)
       +\frac{\partial f(\xi,\eta)}{\partial x}\left(x-\frac{x_i+x_{i+1}}{2}\right)\right|\nonumber\\
    \leqslant&\frac{C\ln N}{N}.
 \label{th4}
\end{align}
Let now $i=N/4+1,\ldots,N/2-1.$  We have that
\begin{align}
  \left|\psi(x,y(x))- \overline{\psi}_i\right|=&\left|f(x,y(x))-f\left(\tfrac{x_i+x_{i+1}}{2},\tfrac{\overline{y}_i+\overline{y}_{i+1}}{2}\right)
         -\gamma\left(y(x)-\frac{\overline{y}_i+\overline{y}_{i+1}}{2} \right)  \right|\nonumber\\
     =&\left|\left(\frac{\partial f(\xi,\eta)}{\partial y}-\gamma\right)\left(y(x)-\frac{\overline{y}_i+\overline{y}_{i+1}}{2}\right)
       +\frac{\partial f(\xi,\eta)}{\partial x}\left(x-\frac{x_i+x_{i+1}}{2}\right)\right|\nonumber\\
    \leqslant&\frac{C}{N},
 \label{th5}
\end{align}
where $\xi\in\left(x,(x_i+x_{i+1})/2\right)$ or $\xi\in((x_i+x_{i+1})/2,x)$ in \eqref{th4}, and  $\eta\in (y,(y_i+y_{i+1})/2)$
or  $\eta\in((y_i+y_{i+1})/2,y)$ in \eqref{th5}.\\\\

\noindent Let us estimate another difference $\psi(x,y(x))- \overline{\psi}_i$ on the interval $\left[ N/4,N/4+1\right].$
Since $\varepsilon^2y''(x)=f(x,y(x))$, we get the estimate
\begin{align}
  \left|f(x,y(x))-f\left(\tfrac{x_i+x_{i+1}}{2},\tfrac{\overline{y}_i+\overline{y}_{i+1}}{2} \right)\right|
            &\leqslant\left|f(x,y(x)\right|+\left|f\left(\tfrac{x_i+x_{i+1}}{2},\tfrac{\overline{y}_i+\overline{y}_{i+1}}{2} \right)\right|
              \leqslant\frac{C}{N^2}.
   \label{th6}
\end{align}
Now, from $\left|y(x_i)-\overline{y}_i\right|\leqslant\frac{C\ln^2 N}{N^2},\:i=0,\ldots,N,$  and decomposition and estimates from Theorem \ref{teorema2}, we get the following estimate
\begin{align}
\left|y(x)-\frac{\overline{y}_i+\overline{y}_{i+1}}{2}\right|\leqslant&\left|y(x)-\frac{y(x_i)+y(x_{i+1})}{2}\right|+\frac{C_1\ln^2 N}{N^2}\nonumber\\
                  \leqslant&\left| s(x)-\frac{s(x_i)+s(x_{i+1})}{2}\right| +\left|r(x)-\frac{r(x_i)+r(x_{i+1})}{2}\right|+\frac{C_1\ln^2 N}{N^2}\nonumber\\
                  \leqslant&\frac{C_2}{N^2}+\left|r'(\mu)\right|\left(x-\frac{x_i+x_{i+1}}{2}\right)+\frac{C_1\ln^2 N}{N^2}\leqslant\frac{C}{N},
 \label{th7}
\end{align}
where $\mu\in(x,(x_i+x_{i+1})/2)$ or $\mu\in((x_i+x_{i+1})/2,x).$
Now from  \eqref{mr3}, Lemma \ref{lemaeks1}, Lemma \ref{lemaeksp3}, and the estimates \eqref{th4}, \eqref{th5}, \eqref{th6} and  \eqref{th7}  the 	
assertion of the theorem follows.

\end{proof}

\noindent According the proof of the previous theorem it is  shown that  the difference between the exact and approximate solution  $\left| y(x)-Y(x)\right|$ on $[0,\lambda]$ is of the order $\mathcal{O}\left(\ln^2 N/N^2\right),$ while on $[\lambda,1-\lambda]$ that order of the error is  $\mathcal{O}\left(1/N\right).$ Based on the Theorem \ref{thb11}, the difference between the exact and the approximate solution on the mesh points is of order  $\mathcal{O}\left(\ln^2N/N^2\right).$ In order to get the approximate  solution with a satisfactory value of the error, we must conduct the correction of the approximate  solutions given in \eqref{mr2}. Namely, since this constructed approximate solution performs well at the layer, which is the most problematic part of the analysis, we will take on this part the approximate solution which was given in \eqref{mr2}. In the remaining part of the observed domain, i.e. for $x\in\left[\lambda,1-\lambda\right]$ we will use a piecewise linear function.\\\\
Therefore, for $x\in[0,\lambda]\cup[1-\lambda,1],$ we use
\begin{equation}
  \tilde{y}_i(x)=\overline{y}_iu^{I}_i(x)+\overline{y}_{i+1}u^{II}_i(x)+\int_{x_i}^{x_{i+1}}{G_i(x,s)\psi(x_i,\overline{y})\dif s},
 \label{eksp9}
\end{equation}
while for $x\in[\lambda,1-\lambda],$  we use the following interpolation polynomial
\begin{equation}
 \overline{ p}(x)=\left\{
               \begin{array}{cl}
                 \overline{ p}_{N/4}(x)\quad &x\in[x_{N/4},x_{N/4+1}],\\
                \vdots\hspace{.3cm}&\\
                 \overline{ p}_{i}(x)\quad& x\in[x_{i},x_{i+1}],\\
                       \vdots\hspace{.3cm}&\\
                  \overline{p}_{3N/4-1}(x)\quad& x\in[x_{3N/4-1},x_{3N/4}] ,
               \end{array}
       \right.
 \label{eksp10}
\end{equation}
where
\begin{equation}
   \overline{ p}_i(x)=\left\{ \begin{aligned}
              \dfrac{ \overline{y}_{i+1}-\overline{y}_i}{x_{i+1}-x_i}&(x-x_i)+\overline{y}_i\quad &x\in[x_i,x_{i+1}],\\\\
              &0 &x \not\in[x_i,x_{i+1}]
               \end{aligned}
               \right.
 \label{eksp11}
\end{equation}
and $\overline{y}_i,\:i=N/4,\ldots,3N/4-1$ are the already calculated values of the approximate solutions in the mesh points. Now, the approximate solution to the problem  \eqref{uvod1}--\eqref{uvod3}, has the following form
\begin{equation}
  \widetilde{Y}(x)=\left\{
         \begin{array}{cl}
            \tilde{y}_i(x) & x\in[0,\lambda],\\\\
            \overline{p}(x)& x\in[\lambda,1-\lambda],\\\\
            \tilde{y}_i(x) & x\in[1-\lambda,1].
         \end{array}
       \right.
\label{eksp12}
\end{equation}

\begin{remark}
In the following theorem, the estimate of the error will be calculated only for  $x\in[\lambda,1/2],$ i.e. for the value of the indexes $i=N/4,\ldots,N/2.$ We use the same assumptions as previously listed in {\rm Remark \ref{remark1}}.
\end{remark}

\begin{theorem}\label{th8}
The following estimate of the error between the exact and approximate solution \eqref{uvod1}--\eqref{uvod3} holds:
\begin{equation}
  \max_{x\in[0,1]} \left|y(x)-\widetilde{Y}(x)\right|\leqslant\frac{C\ln^2N}{N^2}.
 \label{th9}
\end{equation}
\end{theorem}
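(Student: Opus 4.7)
The plan is to exploit the piecewise definition of $\widetilde{Y}$ in \eqref{eksp12}: on $[0,\lambda]\cup[1-\lambda,1]$ it is given by \eqref{eksp9}, while on $[\lambda,1-\lambda]$ it is the piecewise linear object $\overline{p}$. By the symmetry noted in the remark preceding the theorem, I would reduce everything to an analysis on $[0,1/2]=[0,\lambda]\cup[\lambda,1/2]$. On the layer region $[0,\lambda]$ the approximate solution \eqref{eksp9} differs from \eqref{mr2} only in which constant is used under the integral to replace $\psi(s,y(s))$; applying the same three-term split as in \eqref{mr5}, the two nodal contributions are $\mathcal{O}(\ln^2 N/N^2)$ by Theorem \ref{thb11}, and a Lagrange estimate of the form of \eqref{th4} combined with Lemma \ref{lemaeks1} bounds the integral term by $\mathcal{O}(\ln^3 N/N^3)$, which is absorbed. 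Effectively, this reproduces the $[0,\lambda]$-part of Theorem \ref{th1}.

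For $x\in[\lambda,1/2]$ and $x\in[x_i,x_{i+1}]$ with $i\in\{N/4,\ldots,N/2-1\}$, $\widetilde{Y}(x)=\overline{p}_i(x)$ is linear through $(x_i,\overline{y}_i),(x_{i+1},\overline{y}_{i+1})$. The key step is to insert the linear interpolant $\Pi_i y$ of the exact solution at the same nodes and split
\[
y(x)-\overline{p}_i(x)=\bigl[y(x)-\Pi_i y(x)\bigr]+\bigl[\Pi_i y(x)-\overline{p}_i(x)\bigr].
\]
The second bracket is linear with nodal values $y(x_i)-\overline{y}_i$ and $y(x_{i+1})-\overline{y}_{i+1}$, hence bounded in absolute value by $\max(|y(x_i)-\overline{y}_i|,|y(x_{i+1})-\overline{y}_{i+1}|)\leqslant C\ln^2 N/N^2$ by Theorem \ref{thb11}. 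For the first bracket, I would invoke the Shishkin decomposition $y=r+s$ from Theorem \ref{teorema2}: since $\|r''\|_\infty\leqslant C$ and $h_i\leqslant C/N$ on the coarse region, linear interpolation yields $|r(x)-\Pi_i r(x)|\leqslant Ch_i^2\leqslant C/N^2$; for the layer part, the transition point $\lambda=2\varepsilon\sqrt{m}^{-1}\ln N$ has been designed precisely so that $e^{-\lambda\sqrt{m}/\varepsilon}=N^{-2}$, hence $|s(x)|\leqslant C/N^2$ for all $x\in[\lambda,1-\lambda]$ by \eqref{jed7b}, and therefore $|s(x)-\Pi_i s(x)|\leqslant 2\max_{[x_i,x_{i+1}]}|s|\leqslant C/N^2$.

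The main obstacle will be the linear interpolation error for $s$: a naive derivative-based bound of the form $h_i^2\|s''\|_\infty$ fails because $s''$ can be of size $\varepsilon^{-2}$, leaving an uncontrolled factor $\varepsilon^{-2}h_i^2$. The resolution is to abandon the smoothness bound in favour of the pointwise bound $|s-\Pi_i s|\leqslant 2\max|s|$, which trades smoothness for the exponential smallness of $s$ outside the layer; this is exactly what the choice of $\lambda$ is calibrated to deliver. Assembling the three bounds on $[\lambda,1/2]$, the bound from the first paragraph on $[0,\lambda]$, and the mirror analysis on $[1/2,1]$ via the remark, one obtains $\max_{x\in[0,1]}|y(x)-\widetilde{Y}(x)|\leqslant C\ln^2 N/N^2$, which is \eqref{th9}.
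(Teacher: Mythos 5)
Your proposal is correct and follows essentially the same route as the paper's proof: the same triangle--inequality split through the linear interpolant of the exact nodal values, the nodal $\mathcal{O}(\ln^2N/N^2)$ convergence result for the $|p(x)-\overline{p}(x)|$ part, and the decomposition $y=r+s$ of Theorem \ref{teorema2} with an $h^2\|r''\|_\infty$ bound for the regular component and the exponential smallness of $s$ beyond the transition point for the layer component, together with a reduction of the layer region $[0,\lambda]$ to the argument of Theorem \ref{th1}. The only (welcome) difference is one of care rather than of method: you apply the $r$/$s$ split on every coarse-mesh interval and explicitly flag why a naive $h_i^2\|y''\|_\infty$ bound would leave an uncontrolled $\varepsilon^{-2}h_i^2$ factor, whereas the paper invokes that bound directly for $i=N/4+1,\ldots,N/2$ in \eqref{jed15} and carries out the explicit decomposition only on the first off-layer interval $i=N/4$.
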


\begin{proof}
The case of $x\in[0,\lambda]$ has already been proved in the Theorem \ref{th1}. \\

\noindent Let us show now \eqref{th9} on $[\lambda,1/2].$ Let us denote by  $p$ a polynomial which is defined in the same way as the polynomial $\overline{p}$ in \eqref{eksp10}--\eqref{eksp11}. The polynomial $p$ will pass through the points with coordinates  $(x_i,y_i)$ and $(x_{i+1},y_{i+1}),$ ($y_i$ and  $y_{i+1}$ are values of the exact solution in the mesh points, i.e. $y_i=y(x_i),\,y_{i+1}=y(x_{i+1})).$ We have that
\begin{equation}
  \left|y(x)-\overline{p}(x)\right|=\left|y(x)-p(x)+p(x)-\overline{p}(x)\right|\leqslant\left|y(x)-p(x)\right|+\left|p(x)-\overline{p}(x)\right|.
 \label{th10}
\end{equation}

\noindent On every interval $[x_i,x_{i+1}],\:i=N/4,\ldots,N/2,$ we get that
\begin{align}
   p(x)-\overline{p}(x)=
      &\dfrac{ y_{i+1}-y_i}{x_{i+1}-x_i}(x-x_i)+y_i-\dfrac{\overline{y}_{i+1}-\overline{y}_i}{x_{i+1}-x_i}(x-x_i)-\overline{y}_i\nonumber\\
     =&\frac{y_{i+1}-\overline{y}_{i+1}-(y_i-\overline{y}_i)}{x_{i+1}-x_i}(x-x_i)-(y_i-\overline{y}_i),
\end{align}
therefore in view of the Theorem  \ref{thb11} we obtain the estimate
\begin{equation}
   \left|p(x)-\overline{p}(x)\right|\leqslant\frac{C\ln^2N}{N^2},\:i=N/4,\ldots,N/2.
 \label{th11}
\end{equation}

\noindent In the part of the mesh when $i=N/4+1,\ldots,N/2,$  on basis of \cite[Example 8.12]{kress1998numerical}, \eqref{jed7a}, \eqref{jed7b} and \eqref{thb10}, we obtain
 \begin{align}
    \left|y-p_i(x)\right|\leqslant \frac{h^2}{8}\max_{\eta\in[x_i,x_{i+1}]}\left|y''(\eta)\right|
        \leqslant \frac{C}{N^2}.
  \label{jed15}
 \end{align}

\noindent For $i=N/4,$ according to the  decomposition  \eqref{jed6} from Theorem \ref{teorema2},  we obtain
\begin{align}
   y-p_i(x)=&y-\dfrac{ y_{i+1}-y_i}{x_{i+1}-x_i}(x-x_i)+y_i\\
            =&s-\dfrac{ s_{i+1}-s_i}{x_{i+1}-x_i}(x-x_i)+s_i+r-\dfrac{ r_{i+1}-r_i}{x_{i+1}-x_i}(x-x_i)+r_i.
 \label{jed16}
\end{align}
For the layer component, on the basis of the estimate  \eqref{jed7b} we obtain
\begin{multline}
    \left|s-\dfrac{ s_{i+1}-s_i}{x_{i+1}-x_i}(x-x_i)+s_i\right|\leqslant |s|+|s_{i+1}-s_{i}|+|s_i|\\
         \leqslant C_1\left[ e^{-\frac{x}{\varepsilon}\sqrt{m}}+e^{-\frac{1-x}{\varepsilon}\sqrt{m}}
            +\left(e^{-\frac{x_{i+1}}{\varepsilon}\sqrt{m}}+e^{-\frac{1-x_{i+1}}{\varepsilon}\sqrt{m}}\right)\right.\\
            +\left.2\left( e^{-\frac{x_i}{\varepsilon}\sqrt{m}}+e^{-\frac{1-x_i}{\varepsilon}\sqrt{m}}\right) \right]
         \leqslant \frac{C}{N^2}.
 \label{jed17}
\end{multline}
For the regular component we apply again the estimate from  \cite[Example 8.12]{kress1998numerical}, and on the basis of \eqref{jed7a} we get that
\begin{equation}
   \left|r-\dfrac{ r_{i+1}-r_i}{x_{i+1}-x_i}(x-x_i)+r_i\right|\leqslant  \frac{h^2}{8}\max_{\eta\in[x_{i},x_{i+1}]}\left| y''(\eta)\right|\leqslant\frac{C}{N^2}.
 \label{jed18}
\end{equation}
\noindent Now, from \eqref{th11}, \eqref{jed15}, \eqref{jed17} and \eqref{jed18}, and the part of the proof of Theorem \ref{th1}, which is related to $x\in[0,\lambda],$ we obtain \eqref{th9}.
\end{proof}

\section{Numerical Experiments}
\noindent In this section the theoretical results of the previous section will be checked on the following example
\begin{equation}
  \varepsilon^2y''=y+(1-2x)^2-8\varepsilon^2,\:x\in(0,1),\:y(0)=0,\:y(1)=0.
 \label{num1}
\end{equation}
The exact solution of the test example \eqref{num1} is
\begin{equation}
  y(x)=\frac{e^{-x/\varepsilon}+e^{-(1-x)/\varepsilon}}{1+e^{-1/\varepsilon}}+4x(1-x)-1.
\label{num2}
\end{equation}
First we will calculate a discrete approximate solution, i.e. the value of approximate solutions in the mesh points, using the difference scheme \eqref{thb4} and then based on those results we will construct approximate solutions \eqref{mr2} and \eqref{eksp12}. Plots of exact and approximate solutions \eqref{mr2} and  \eqref{eksp12} are represented by Figure \ref{slika56z} and Figure \ref{slika1718z}, while the values of errors are presented in na Figure \ref{slikagreska}.

\noindent The system of equations is solved by Newton's method with initial guess $y_0=-0.5.$ The value of the constant $\gamma=1$ has been chosen so that the condition $\gamma\geq f_{y}(x,y),\:\forall(x, y)\in\left[0,1\right]\times \mathbb{R}$ is satisfied.
Because of the fact that we know the exact solution, we define  the computed error $E_N$ and the computed rate of convergence Ord in the usual way
\begin{equation*}
E_N=\max\limits_{0\leq i\leq N}\left|y(x_i)-\overline{y}^N(x_i) \right|, \qquad
\text{Ord}=\dfrac{\ln E_N-\ln E_{2N}}{\ln\frac{2k}{k+1}},
\label{problem8}
\end{equation*}
where  $N=2^{k},\:k=5,6,\ldots,11,$ $\overline{y}^N(x_i)$ is the numerical solution on a mesh with $N$ subintervals. Values $E_N$ and  Ord are represented in the following table.
\begin{center}
\begin{table}[h]\footnotesize
\centering
\begin{tabular}{c|cc|cc|cc}\hline
     $N$ &$E_n$&Ord&$E_n$&Ord&$E_n$&Ord\\\hline
$2^{5}$&$4.9836e-03$&$2.01$   &$1.8622e-02$&$2.95$    &$1.9923e-02$&$2.55$ \\
$2^{6}$&$1.7834e-03$&$1.98$   &$4.1194e-03$&$2.01$    &$5.4155e-03$&$2.00$ \\
$2^{7}$&$6.1200e-04$&$2.00$   &$1.3925e-03$&$2.00$    &$1.8429e-03$&$2.00$ \\
$2^{8}$&$1.9982e-04$&$2.00$   &$4.5548e-04$&$2.00$    &$6.0172e-04$&$2.00$ \\
$2^{9}$&$6.3269e-05 $&$2.00$  &$1.4417e-04 $&$2.00$   &$1.9039e-04$&$2.00$ \\
$2^{10}$&$1.9527e-05 $&$2.00$ &$4.4492e-05 $&$2.00$   &$5.8762e-05$&$2.00$    \\
$2^{11}$&$5.9069e-06 $&$-$    &$1.3460e-05 $&$-$      &$1.7776e-05$&$-$     \\ \hline
 $\varepsilon$&\multicolumn{2}{c}{$2^{-4}$}&\multicolumn{2}{c}{$2^{-6}$}&\multicolumn{2}{c}{$2^{-10}$}\\\hline\hline
    $N$ &$E_n$&Ord&$E_n$&Ord&$E_n$&Ord\\\hline
$2^{5}$   &$1.9969e-02$&$2.43$     &$1.9957e-02$&$2.41$ &$1.9957e-02$&$2.41$ \\
$2^{6}$   &$5.7712e-03$&$2.02$     &$5.8271e-03$&$2.02$ &$5.8271e-03$&$2.02$ \\
$2^{7}$   &$1.9427e-03$&$2.00$     &$1.9616e-03$&$2.00$ &$1.9616e-03$&$2.00$\\
$2^{8}$   &$6.4337e-04$&$2.00$     &$6.4051e-04$&$2.00$ &$6.4051e-04$&$2.00$ \\
$2^{9}$   &$2.0072e-04$&$2.00$     &$2.0266e-04$&$2.00$ &$2.0266e-04$&$2.00$ \\
$2^{10}$  &$6.1950e-05$&$2.00$     &$6.2550e-05$&$2.00$ &$6.2550e-05$&$2.00$ \\
$2^{11}$  &$1.8740e-05$&$-$        &$1.8921e-05$&$-$    &$1.8921e-05$&$-$  \\\hline
 $\varepsilon$&\multicolumn{2}{c}{$2^{-12}$}&\multicolumn{2}{c}{$2^{-20}$}&\multicolumn{2}{c}{$2^{-30}$}\\\hline\hline
\end{tabular}
\caption{Errors $E_N$ and convergence rates Ord for approximate solutions.}\label{tabela}
\end{table}
\end{center}

\noindent \textbf{The explanations about the figures.}
\noindent In Figure \eqref{slika1}, \eqref{slika2} and  \eqref{slika3}, the plots  of the exact solution of the problem \eqref{uvod1}--\eqref{uvod3} and the approximate solutions \eqref{mr2}  are presented, for the values of the parameters $N=32$ and $\varepsilon=2^{-4},\,2^{-6},\,2^{-10}$, respectively, while in figures  \eqref{slika4}, \eqref{slika5}  and \eqref{slika6} graphics of exact and numerical solution  \eqref{mr2} were given for the values of the parameters $N=64,\,128,\,256$ and $\varepsilon=2^{-10},$ respectively. In figure \eqref{slika1}, \eqref{slika2} and \eqref{slika3} one can notice an increase of the error value, or differences in the graphs between the exact and numerical solutions, while in Figure \eqref{slika1} it is very difficult to distinguish between the exact and numerical solutions \eqref{mr2}, in Figure  \eqref{slika3} the deviation between the numerical and exact solution can be seen. From the presented graphs it is evident that there is a decrease of the error value due to an increase in the number of points  $N.$  \\\\

\noindent In Figures \eqref{slika13}, \eqref{slika14} and \eqref{slika15a} the plots of the exact \eqref{uvod1}--\eqref{uvod3} and approximate solution \eqref{eksp12} are given. For the calculation of the approximate solutions we used $N=32$ points, while the value of the perturbation parameter was $\varepsilon=2^{-4},\,2^{-6},\,2^{-10},$ respectively. From the presented graphics it can be seen a decrease of perturbation parameter $\varepsilon,$ with a constant value of the number of points $N$ a value of the error is slightly  increasing. However, this increase is smaller than in the case of use of approximate solutions  \eqref{mr2}.  In  the Figure \eqref{slika16}, \eqref{slika17} and \eqref{slika18} there  are graphs of the correct solution of the problems \eqref{uvod1}--\eqref{uvod3} and approximate solutions. Graphs on all three figures are obtained for a fixed value of parameter $\varepsilon,$ while approximate solution is obtained by using $N=64,\,128,\, 256$ number of points, respectively.  \\\\

\noindent In Figures \eqref{slika11a}, \eqref{slika11c} and  \eqref{slika11e} the plots of the error of the approximate solutions  \eqref{mr2} are represented, while in Figures  \eqref{slika11b}, \eqref{slika11d} and  \eqref{slika11f} are graphs of the error of the approximate solution \eqref{eksp12}. Side by side are graphs of the errors of the approximate solution, to the left is \eqref{mr2}, while on the right are approximate solution \eqref{eksp12} for the same values of the parameter $\varepsilon$ and $N$. From the graph we can see that values of the error agree with the theoretical results. In the graph, on the right side is a value of the error from the order $\mathcal{O}\left(N^{-1}\right),$ while on the graphs from the right side is a value of the error from the order $\mathcal{O}\left(N^{-2}\ln^2N  \right),$ and therefore in this way we have a confirmation of the theoretical results.

\section{Conclusion}
\noindent In this paper we performed a construction of approximate solutions for singularly--perturbed boundary value problem \eqref{uvod1}--\eqref{uvod3}. First, we calculated a discrete approximate solution, i.e. the value of approximate solution in points of the mesh, and then we constructed an approximate solution by using a representation of the exact solution via Green's functions. Order of the value of the error is  $\mathcal{O}\left(N^{-1}\right)$ in the maximum norm. The basis functions are exponential.  From Theorem \ref{th1} we can see that the value of errors in this way constructed approximate solution is in the part of the domain where lies boundary layer of order $\mathcal{O}\left(\ln^2N/N^2\right),$ while out of the layer are of order   $\mathcal{O}\left(1/N\right).$ In order to gain the approximate solution with the smallest error, basis function of the exponential type of the outer boundary layer is replaced with linear functions. Error in this case is in the order $\mathcal{O}\left(\ln^2N/N^2\right),$ also in the maximum norm.

\newpage
\begin{figure}[H]
\centering
\begin{subfigure}[b]{.45\textwidth}
 \includegraphics[width=\textwidth]{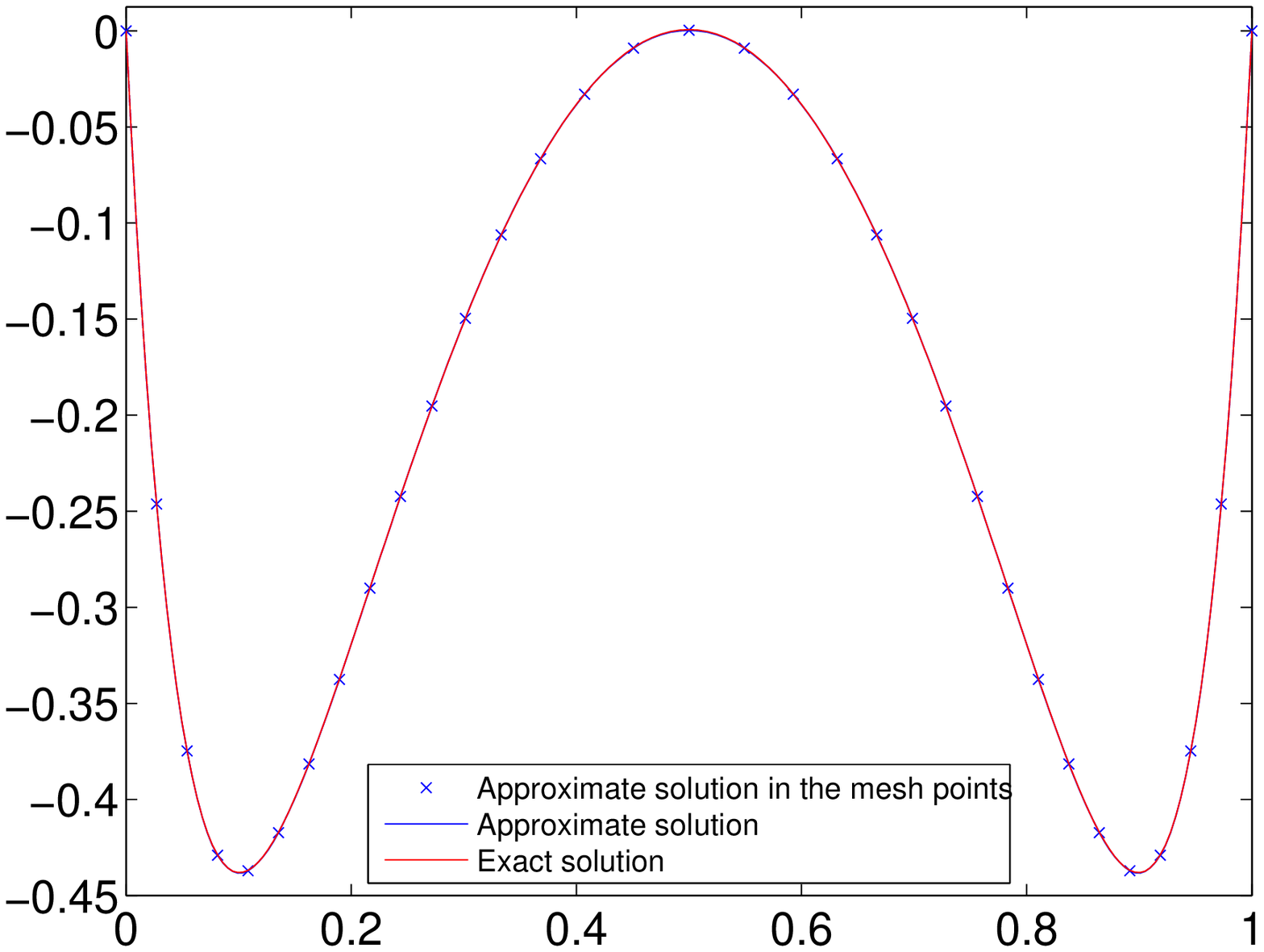}
 \caption{$N=32,\:\varepsilon=2^{-4}$}
\label{slika1}
\end{subfigure}
\begin{subfigure}[b]{.45\textwidth}
 \includegraphics[width=\textwidth]{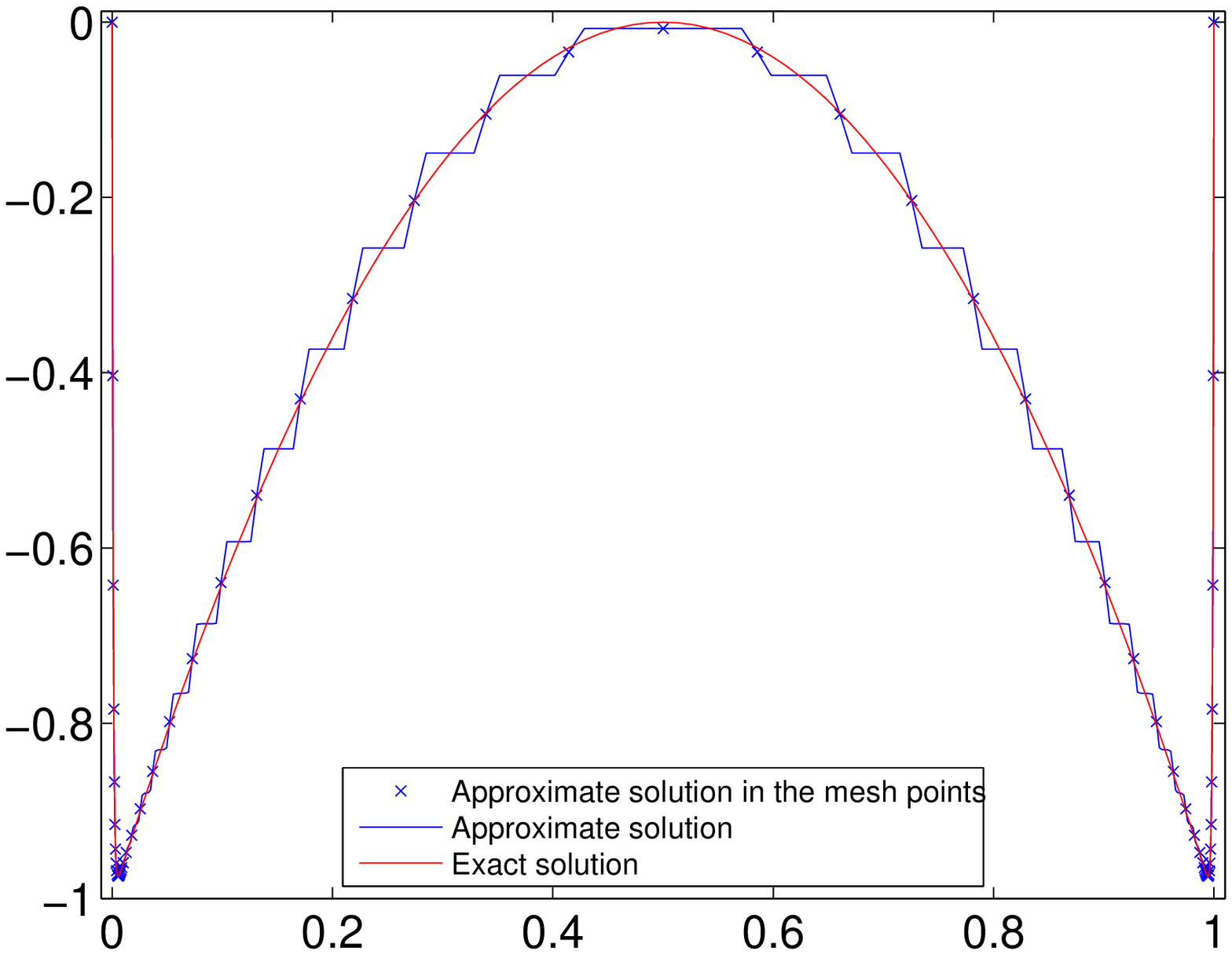}
 \caption{$N=64,\:\varepsilon=2^{-10}$}
\label{slika4}
\end{subfigure}
\label{slika34z}
\centering
\begin{subfigure}[b]{.45\textwidth}
 \includegraphics[width=\textwidth]{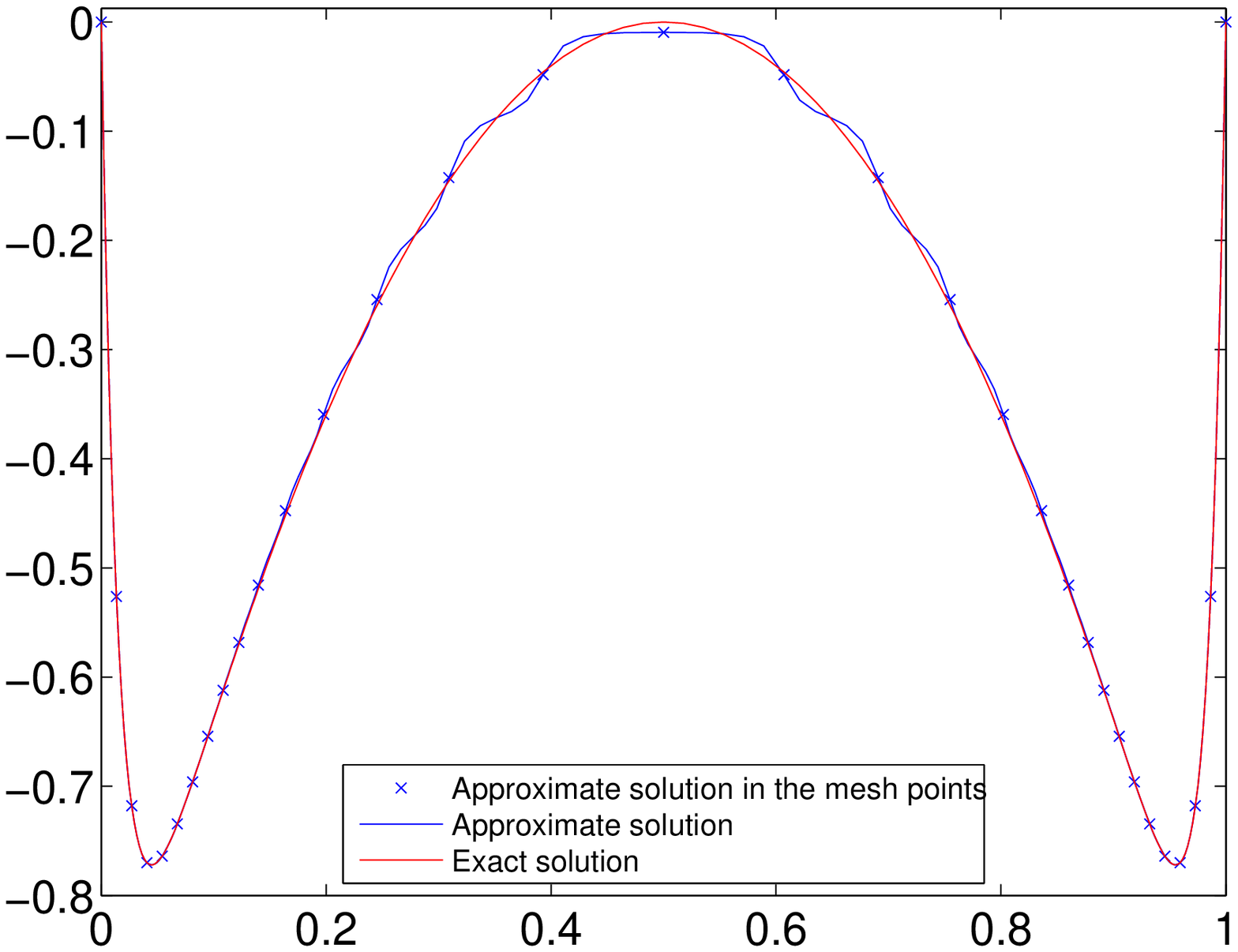}
 \caption{$N=32,\:\varepsilon=2^{-6}$}
\label{slika2}
\end{subfigure}
\label{slika12z}
\centering
\begin{subfigure}[b]{.45\textwidth}
 \includegraphics[width=\textwidth]{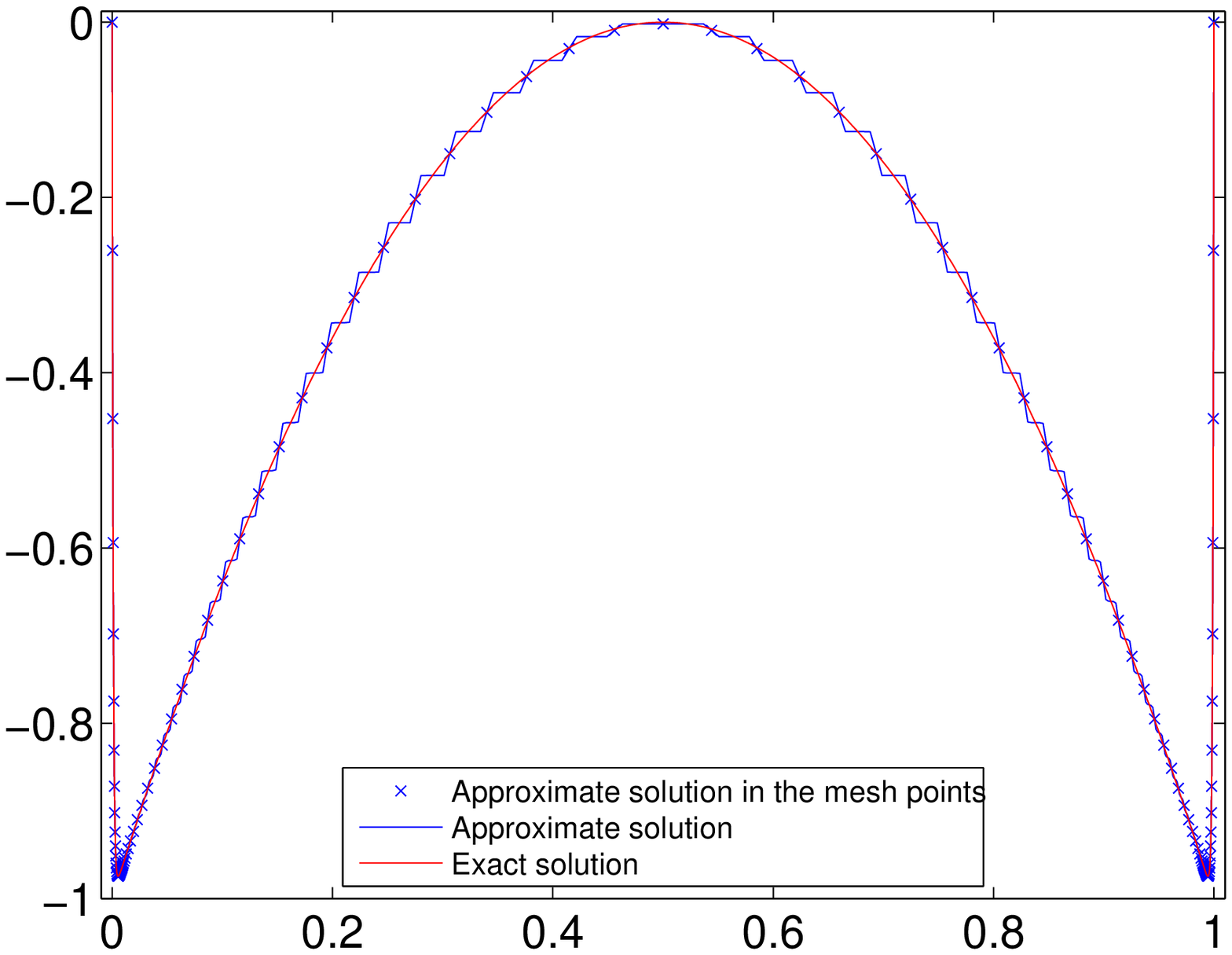}
 \caption{$N=128,\:\varepsilon=2^{-10}$}
\label{slika5}
\end{subfigure}
\begin{subfigure}[b]{.45\textwidth}
 \includegraphics[width=\textwidth]{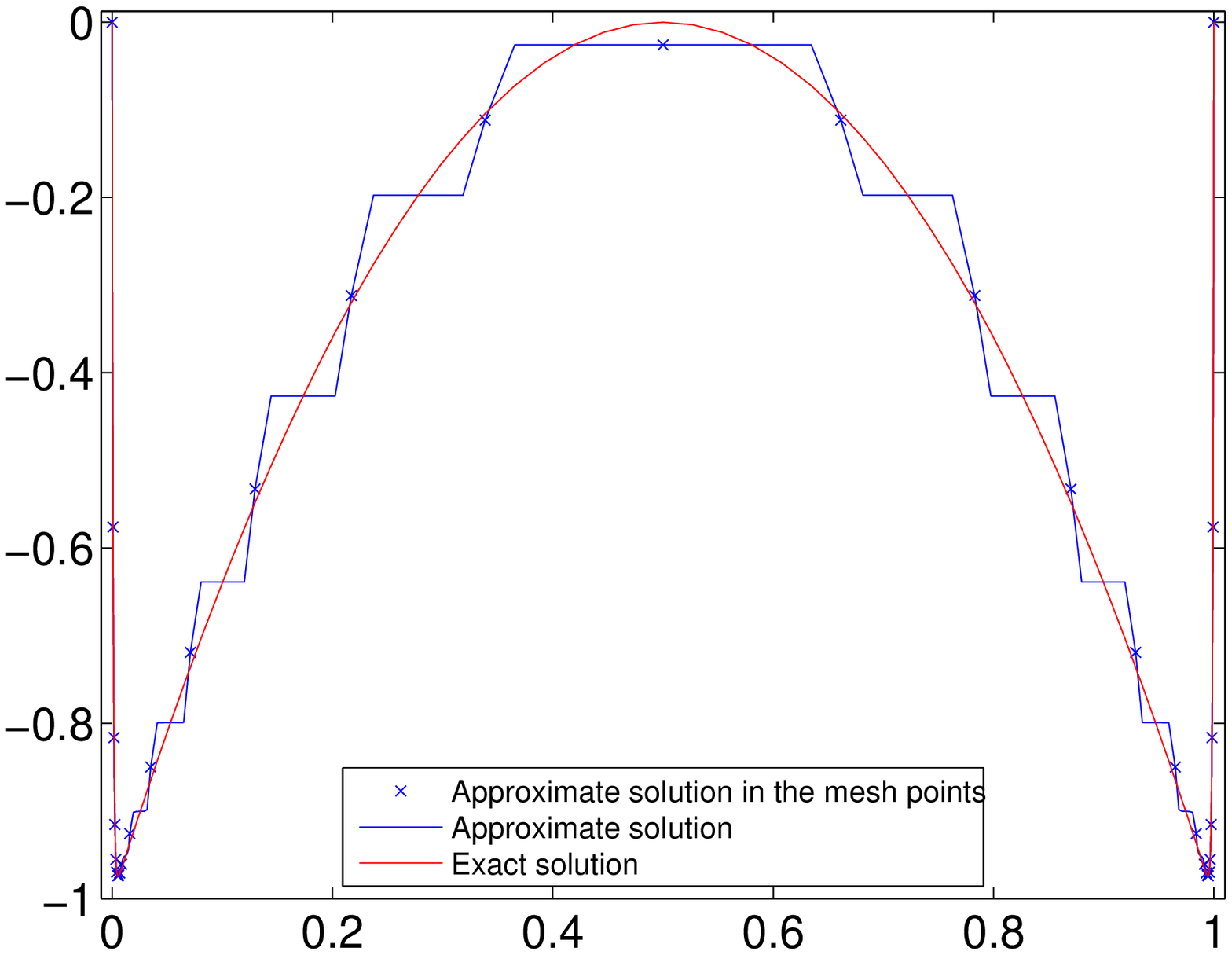}
 \caption{$N=32,\:\varepsilon=2^{-10}$}
\label{slika3}
\end{subfigure}
\begin{subfigure}[b]{.45\textwidth}
 \includegraphics[width=\textwidth]{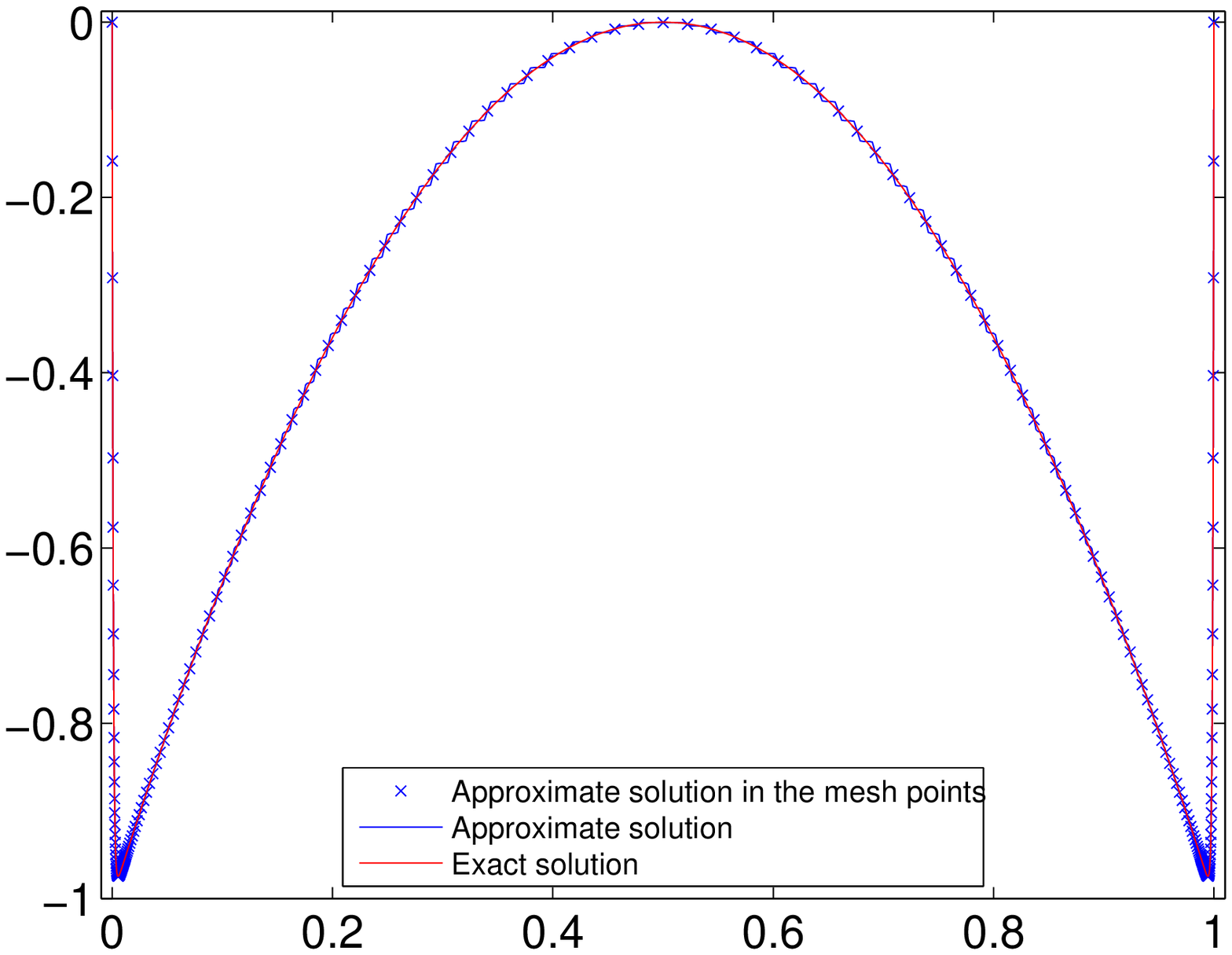}
 \caption{$N=256,\:\varepsilon=2^{-10}$}
\label{slika6}
\end{subfigure}
\caption{ }
\label{slika56z}
\end{figure}

\begin{figure}[H]
\centering
\begin{subfigure}[b]{.45\textwidth}
 \includegraphics[width=\textwidth]{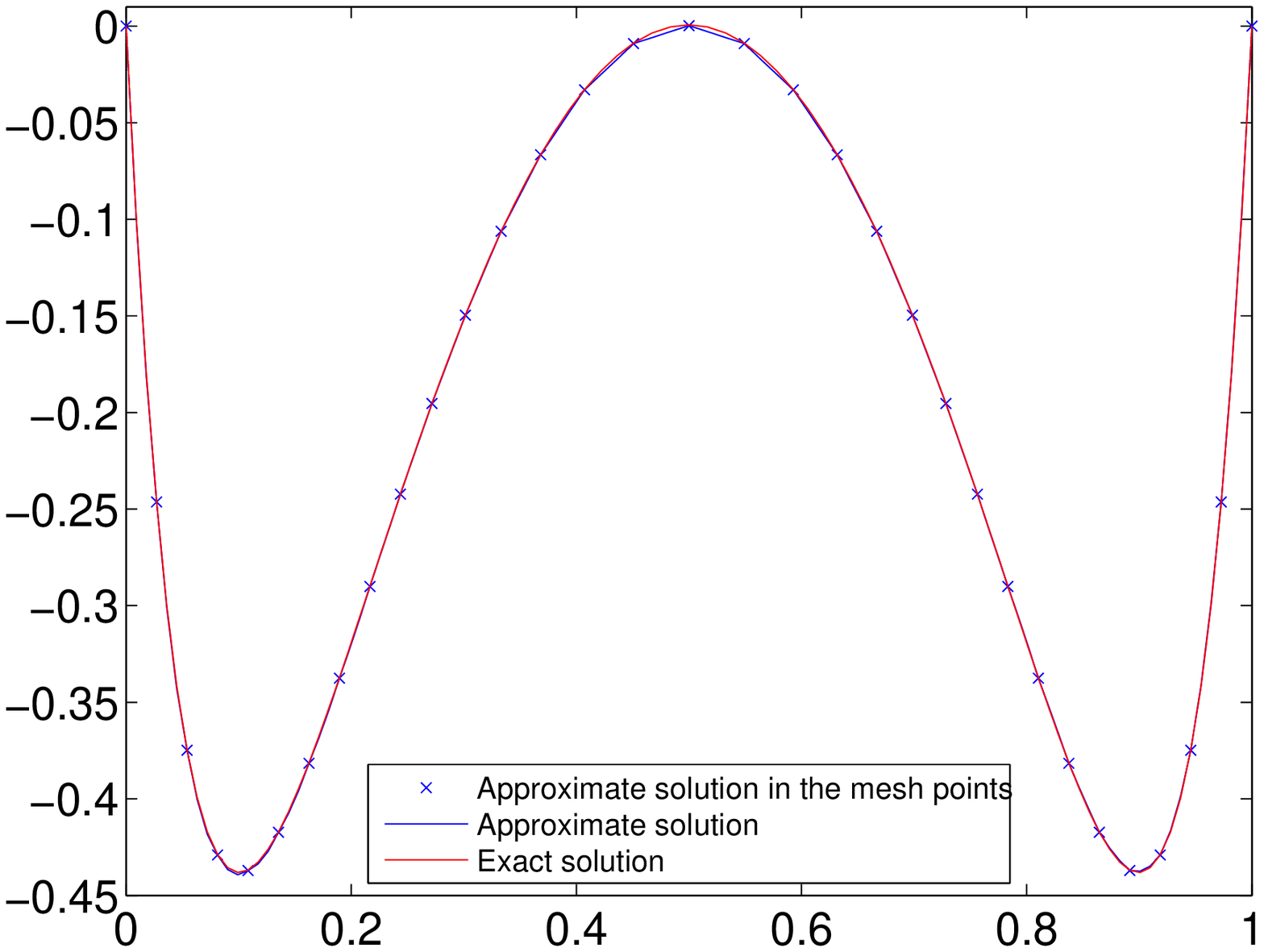}
 \caption{$N=32,\:\varepsilon=2^{-4}$}
\label{slika13}
\end{subfigure}
\begin{subfigure}[b]{.45\textwidth}
 \includegraphics[width=\textwidth]{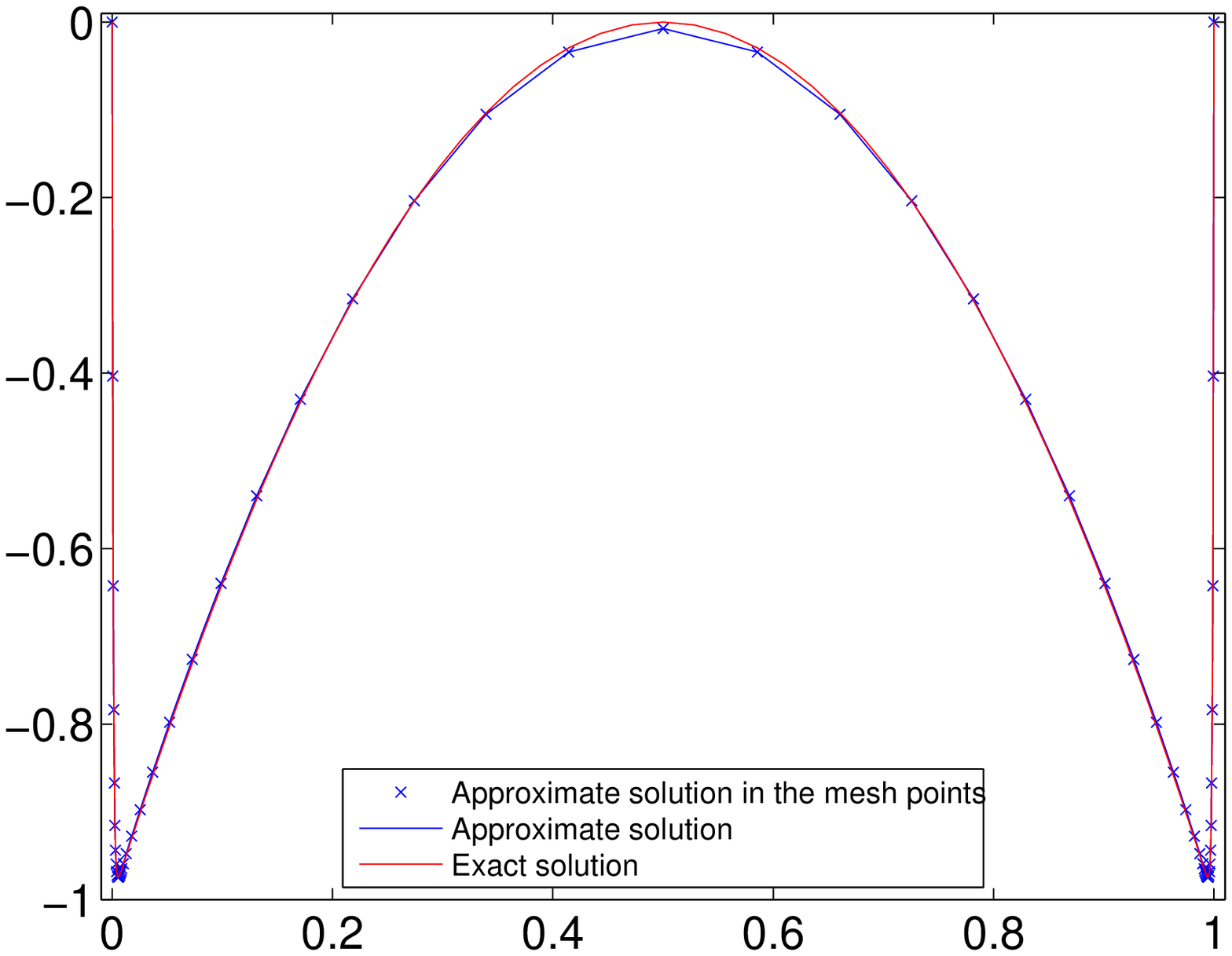}
 \caption{$N=64,\:\varepsilon=2^{-4}$}
\label{slika16}
\end{subfigure}
\label{slika15b16z}
\begin{subfigure}[b]{.45\textwidth}
 \includegraphics[width=\textwidth]{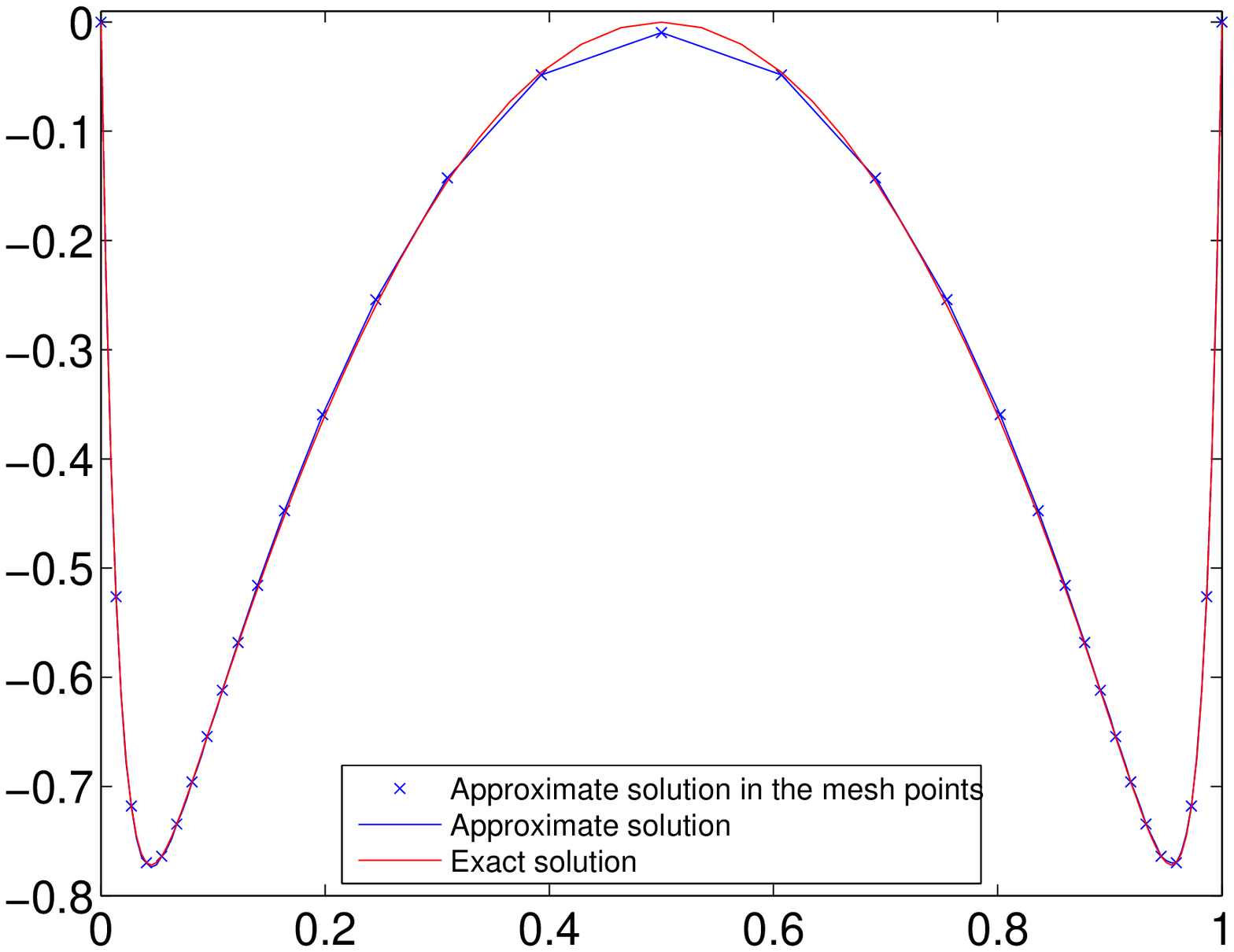}
 \caption{$N=32,\:\varepsilon=2^{-6}$}
\label{slika14}
\end{subfigure}
\label{slika1314z}
\centering
\begin{subfigure}[b]{.45\textwidth}
 \includegraphics[width=\textwidth]{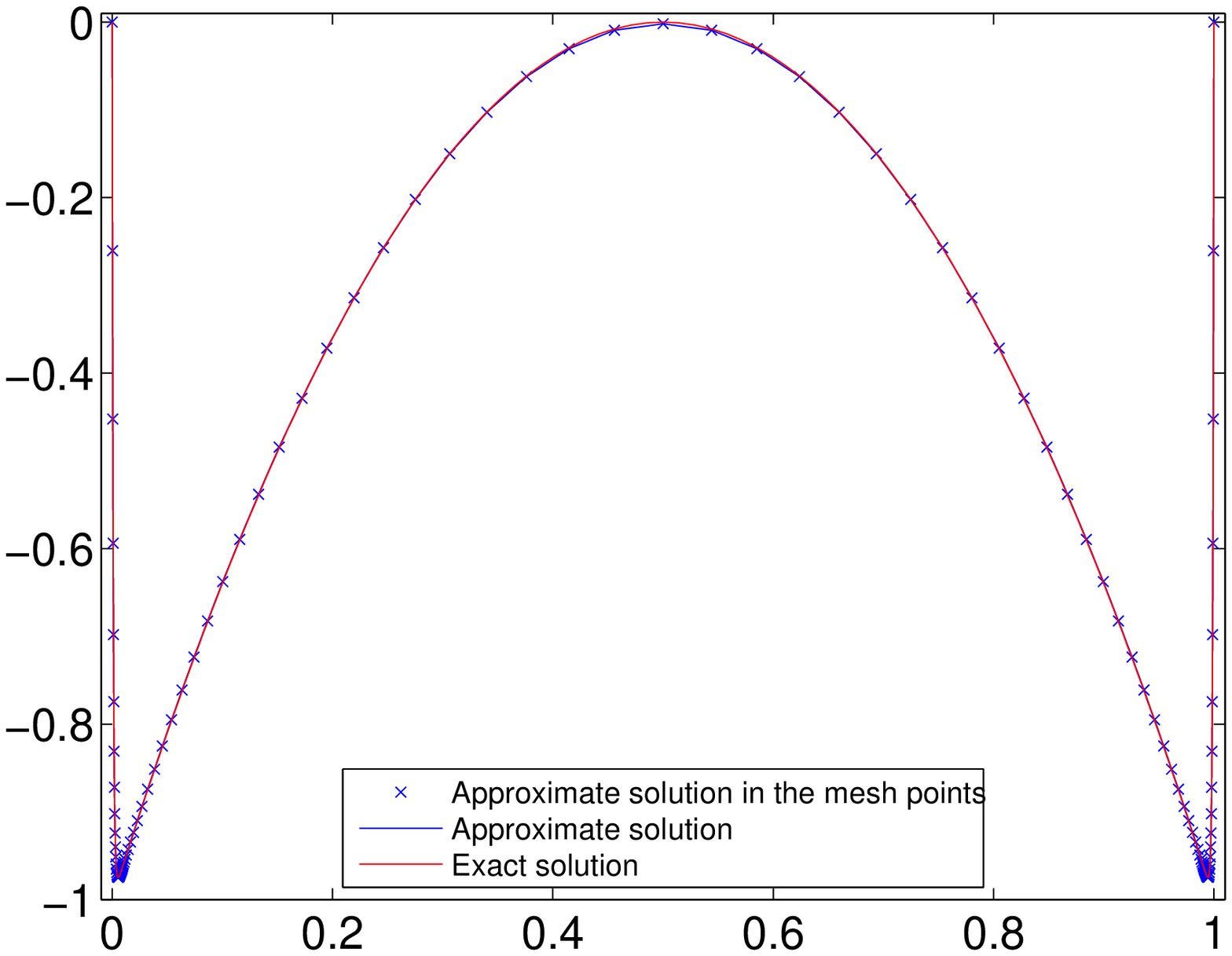}
 \caption{$N=128,\:\varepsilon=2^{-10}$}
\label{slika17}
\end{subfigure}
\centering
\begin{subfigure}[b]{.45\textwidth}
 \includegraphics[width=\textwidth]{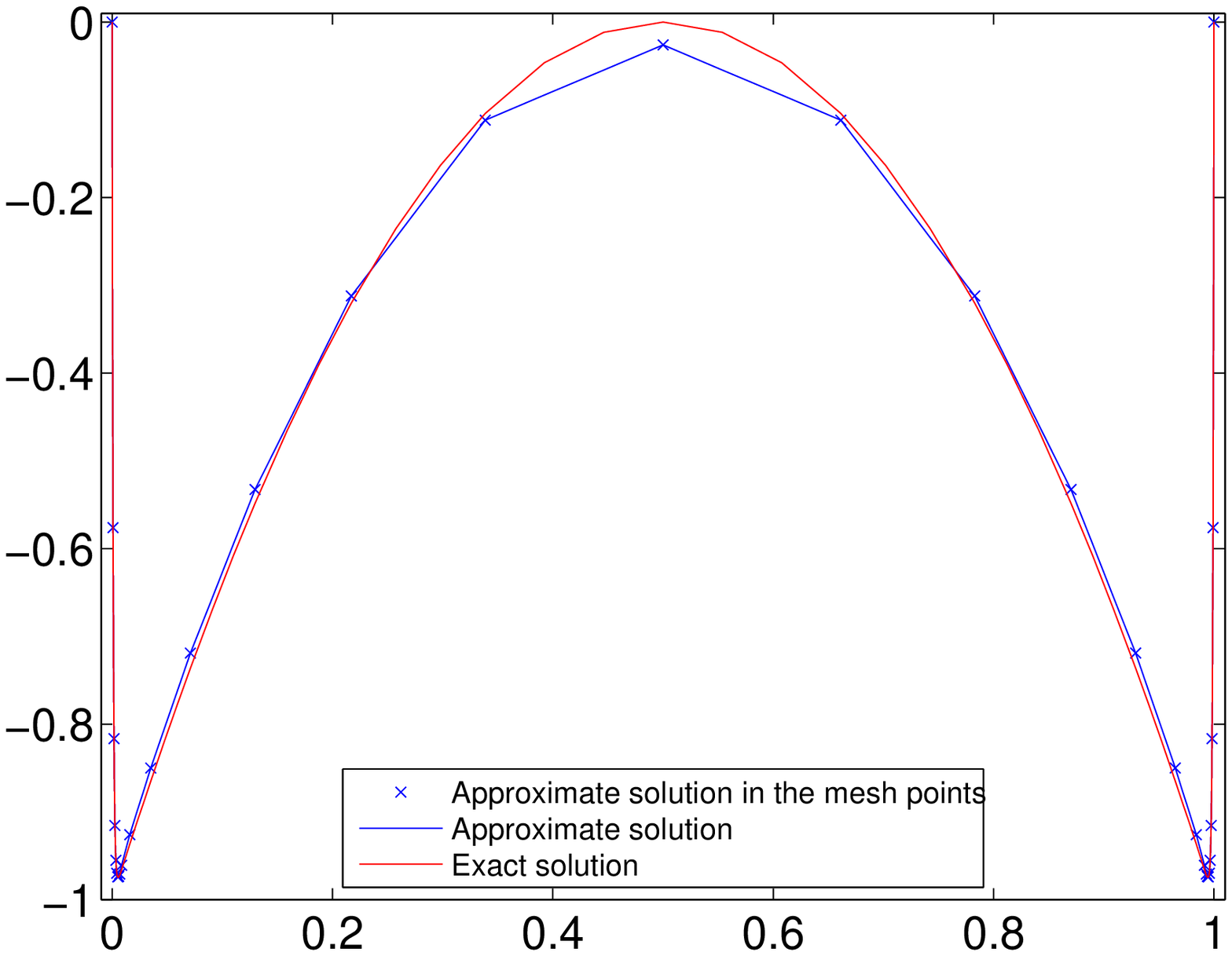}
 \caption{$N=32,\:\varepsilon=2^{-10}$}
\label{slika15a}
\end{subfigure}
\begin{subfigure}[b]{.45\textwidth}
 \includegraphics[width=\textwidth]{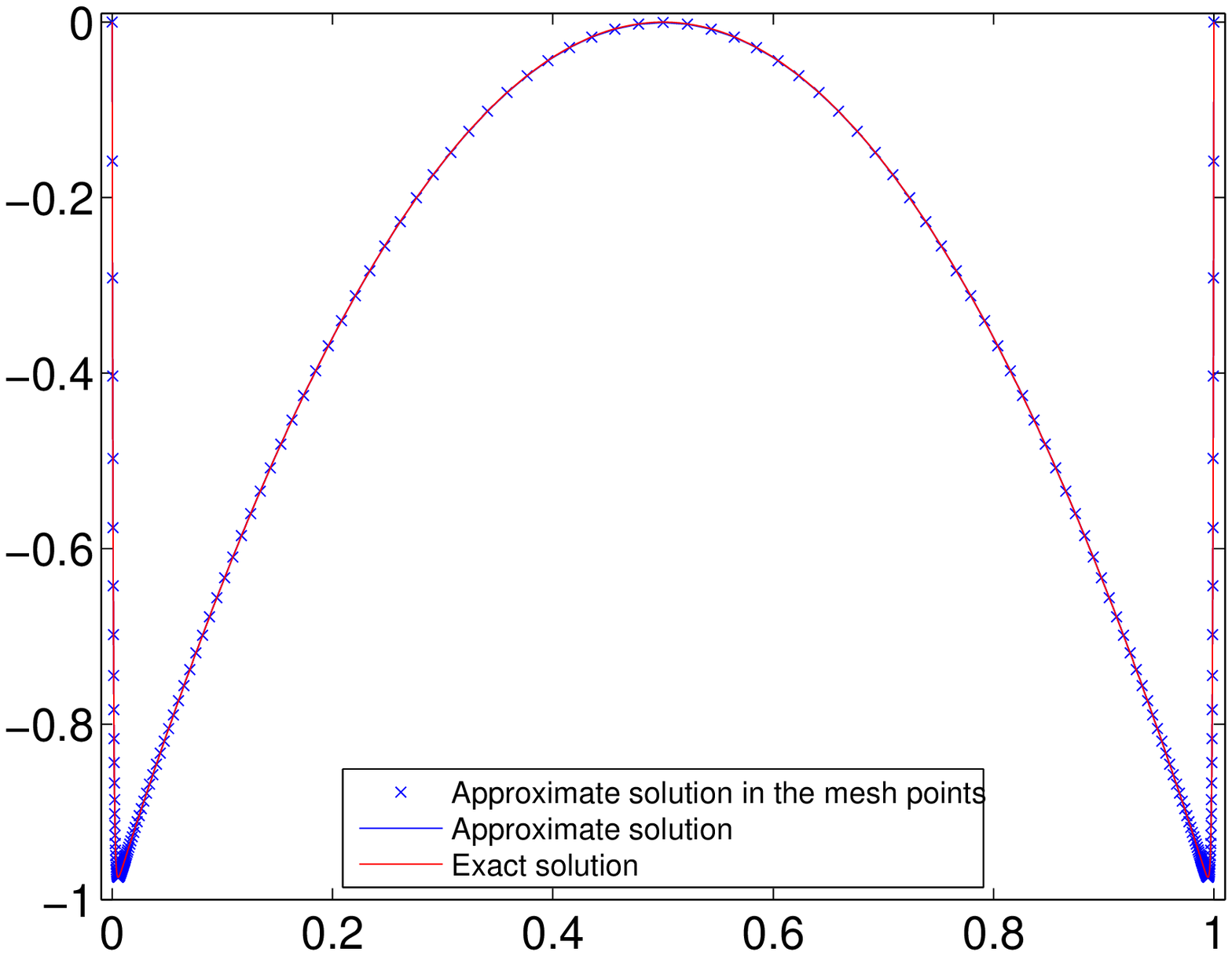}
 \caption{$N=256,\:\varepsilon=2^{-10}$}
\label{slika18}
\end{subfigure}
\caption{ }
\label{slika1718z}
\end{figure}

\begin{figure}[H]
\centering
\begin{subfigure}[b]{.45\textwidth}
 \includegraphics[width=\textwidth]{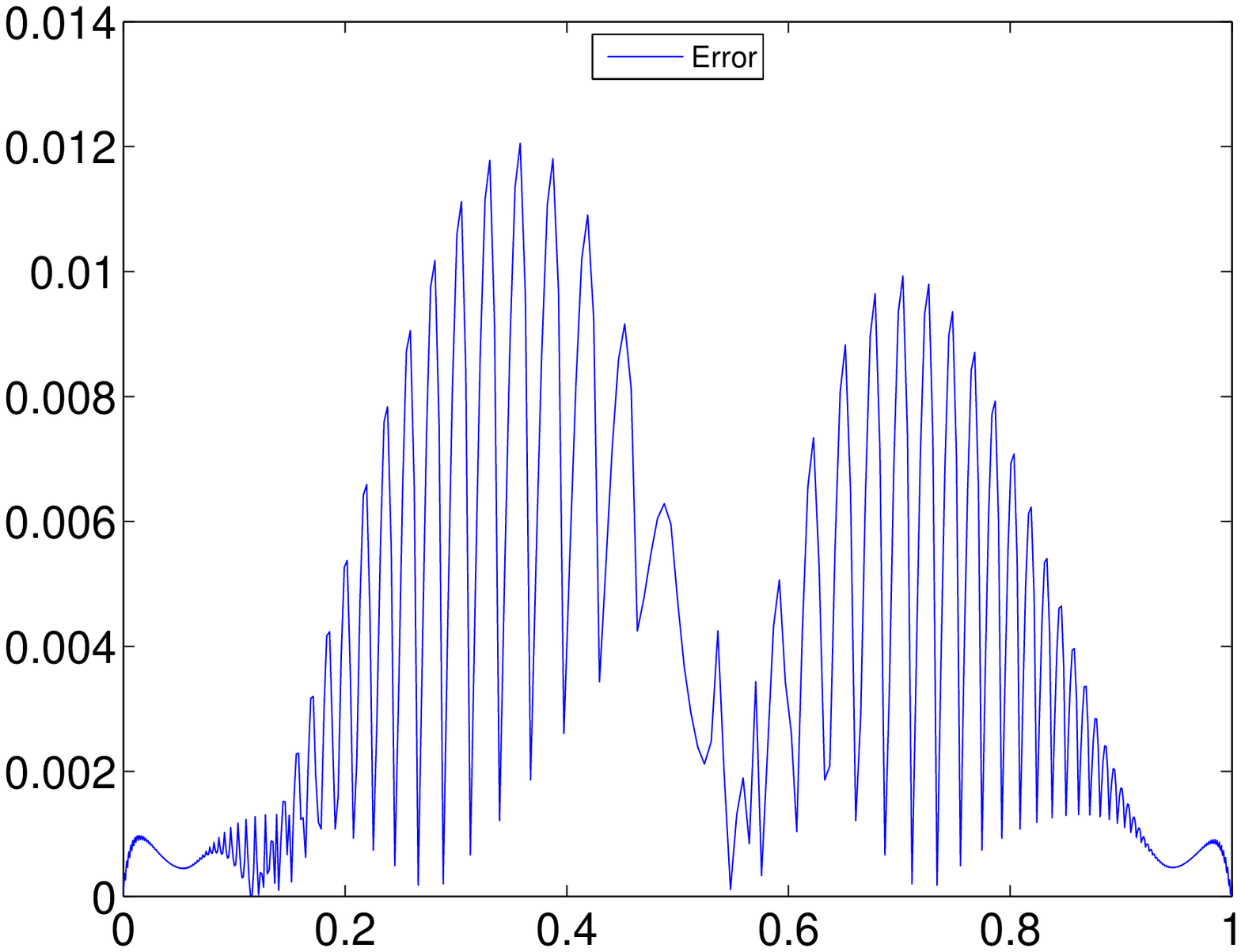}
 \caption{$N=128,\:\varepsilon=2^{-6}$}
\label{slika11a}
\end{subfigure}
\centering
\begin{subfigure}[b]{.45\textwidth}
 \includegraphics[width=\textwidth]{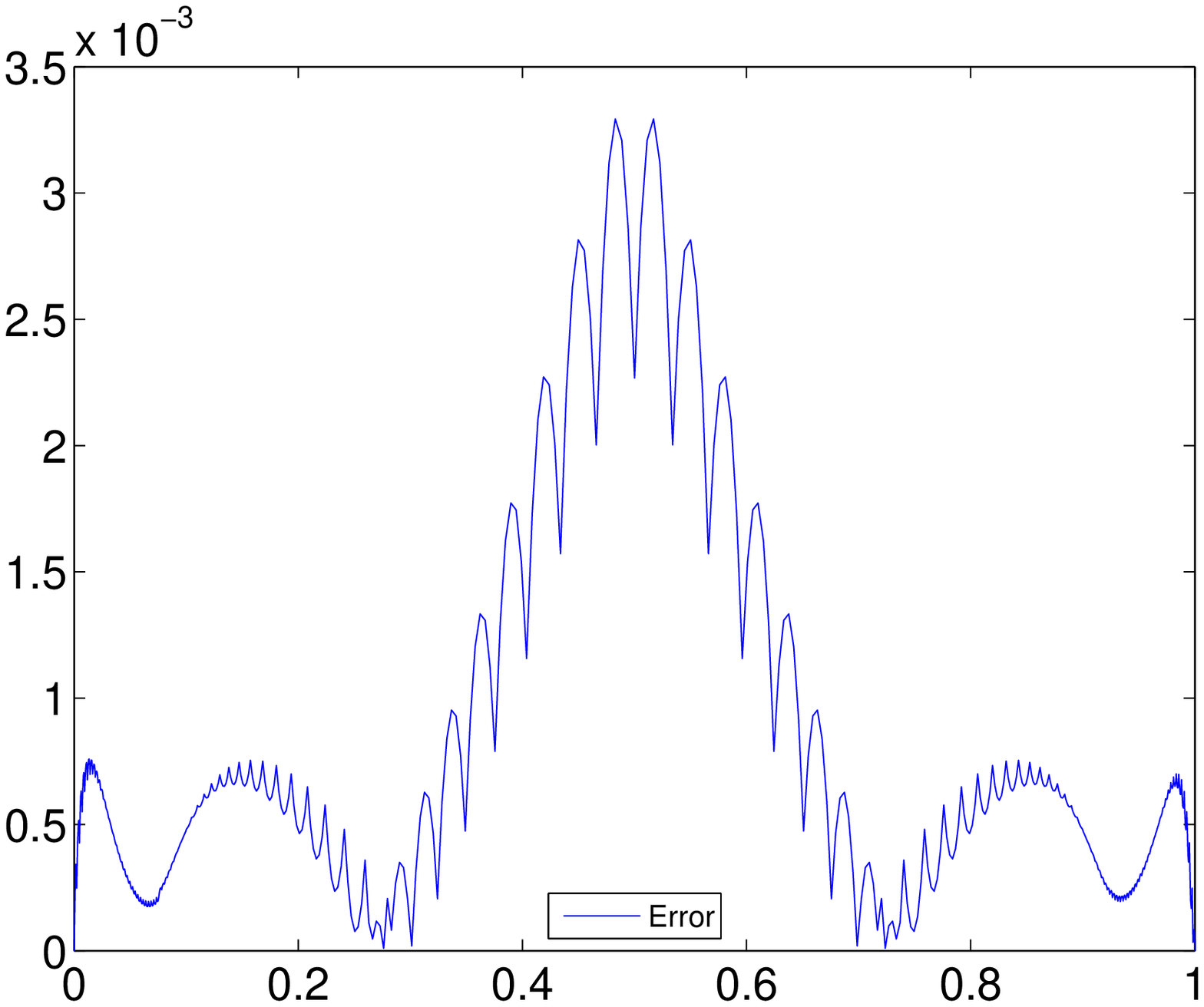}
 \caption{$N=128,\:\varepsilon=2^{-6}$}
\label{slika11b}
\end{subfigure}
\centering
\begin{subfigure}[b]{.45\textwidth}
 \includegraphics[width=\textwidth]{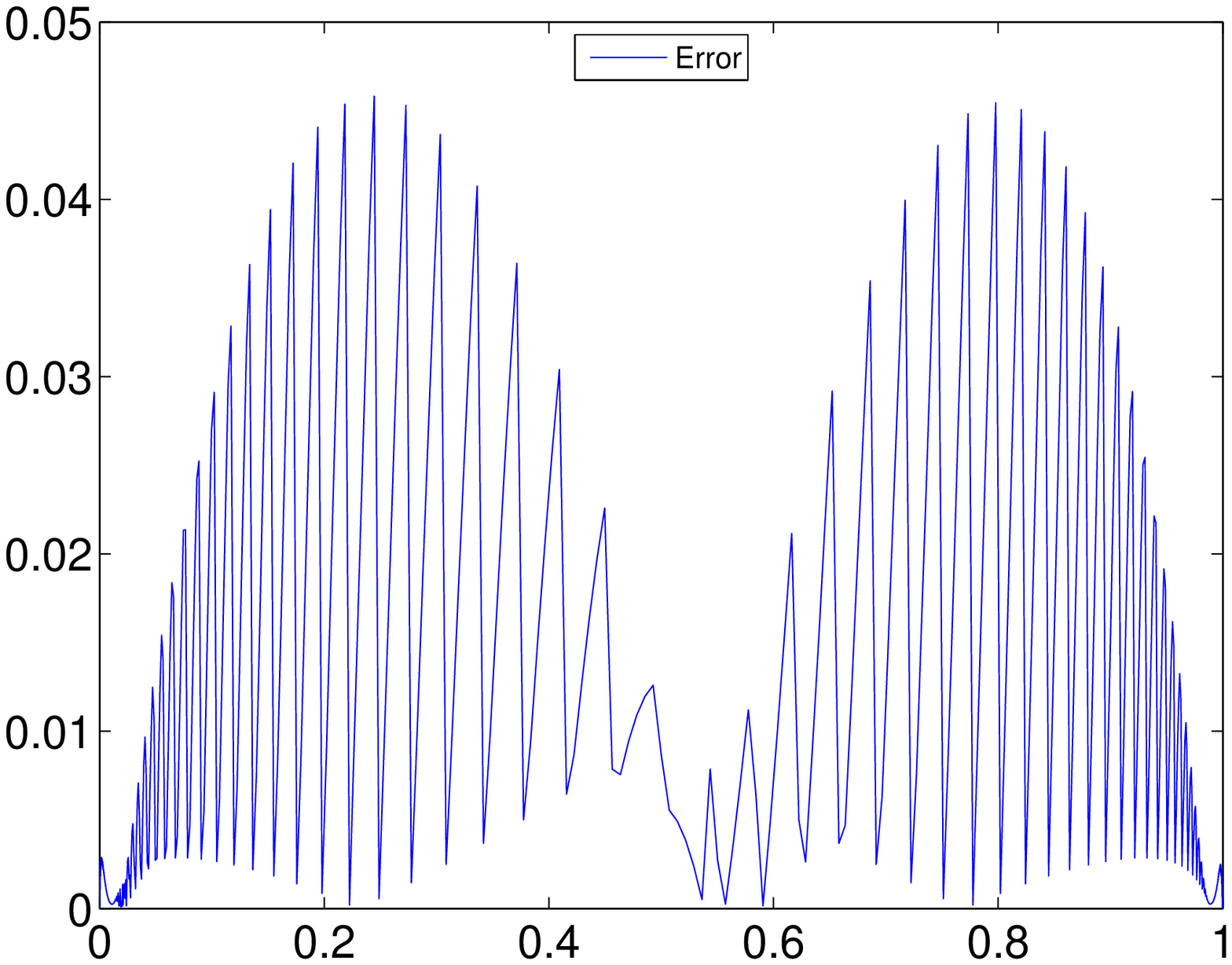}
 \caption{$N=128,\:\varepsilon=2^{-9}$}
\label{slika11c}
\end{subfigure}
\centering
\begin{subfigure}[b]{.45\textwidth}
 \includegraphics[width=\textwidth]{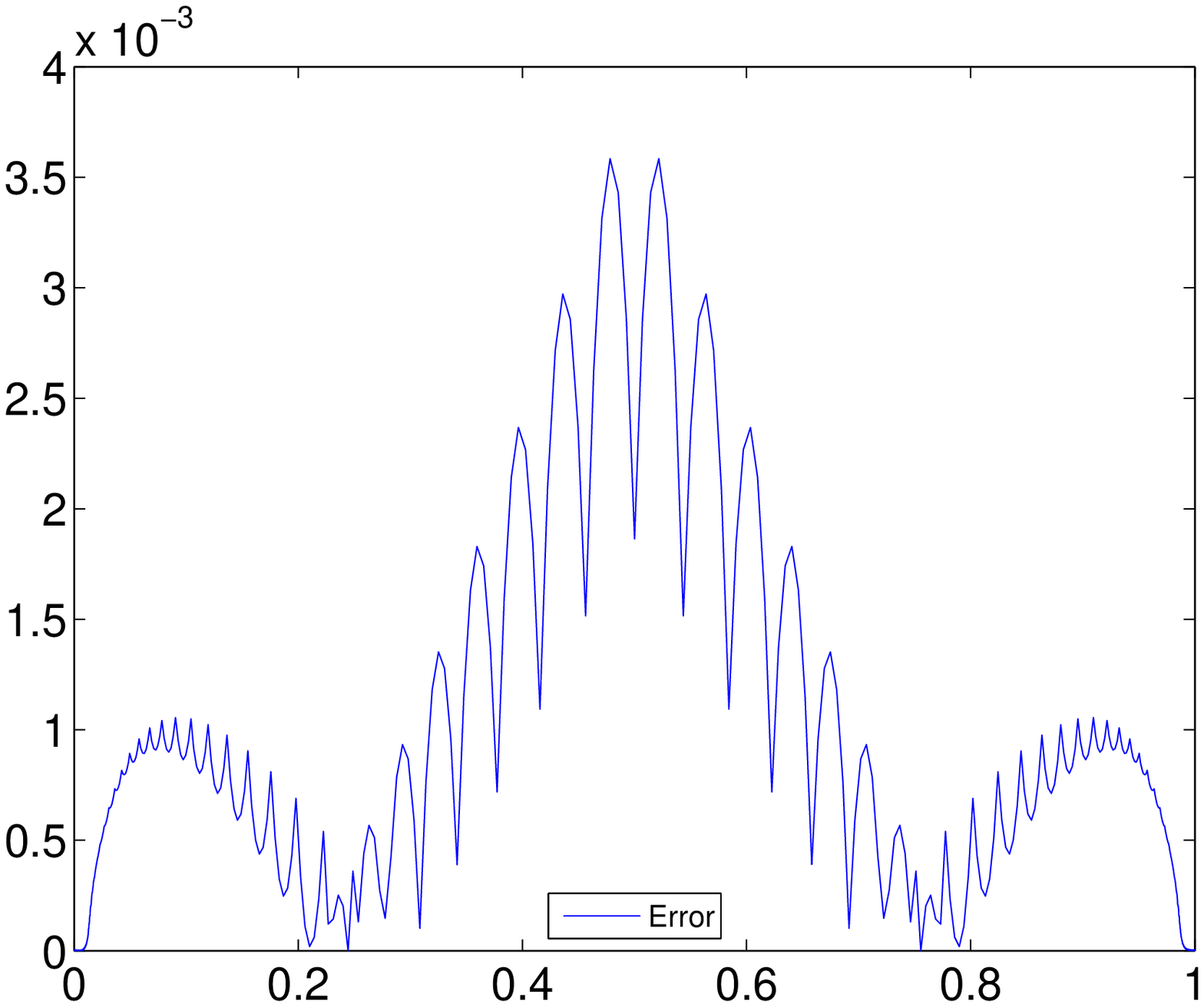}
 \caption{$N=128,\:\varepsilon=2^{-9}$}
\label{slika11d}
\end{subfigure}
\centering
\begin{subfigure}[b]{.45\textwidth}
 \includegraphics[width=\textwidth]{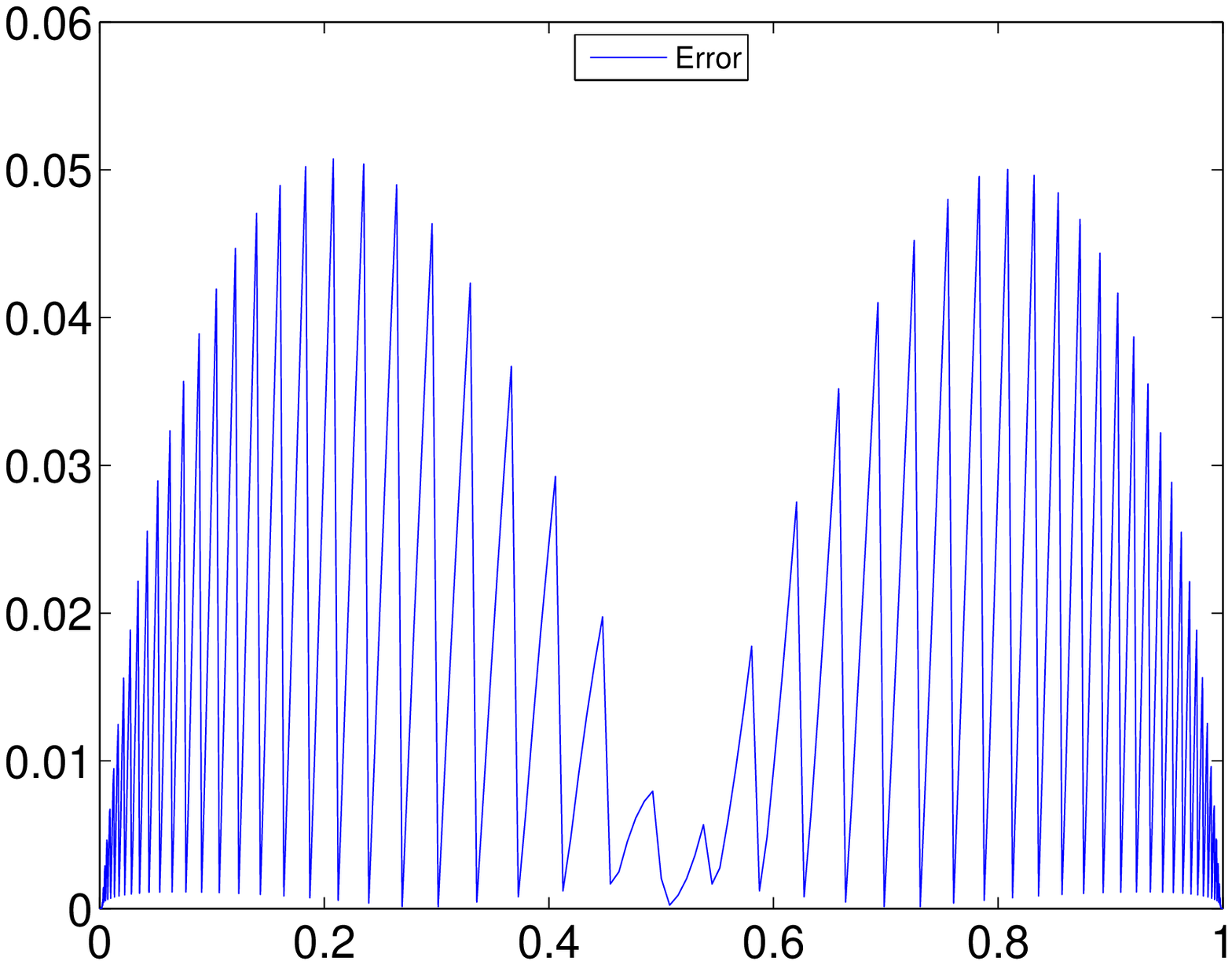}
 \caption{$N=128,\:\varepsilon=2^{-12}$}
\label{slika11e}
\end{subfigure}
\centering
\begin{subfigure}[b]{.45\textwidth}
 \includegraphics[width=\textwidth]{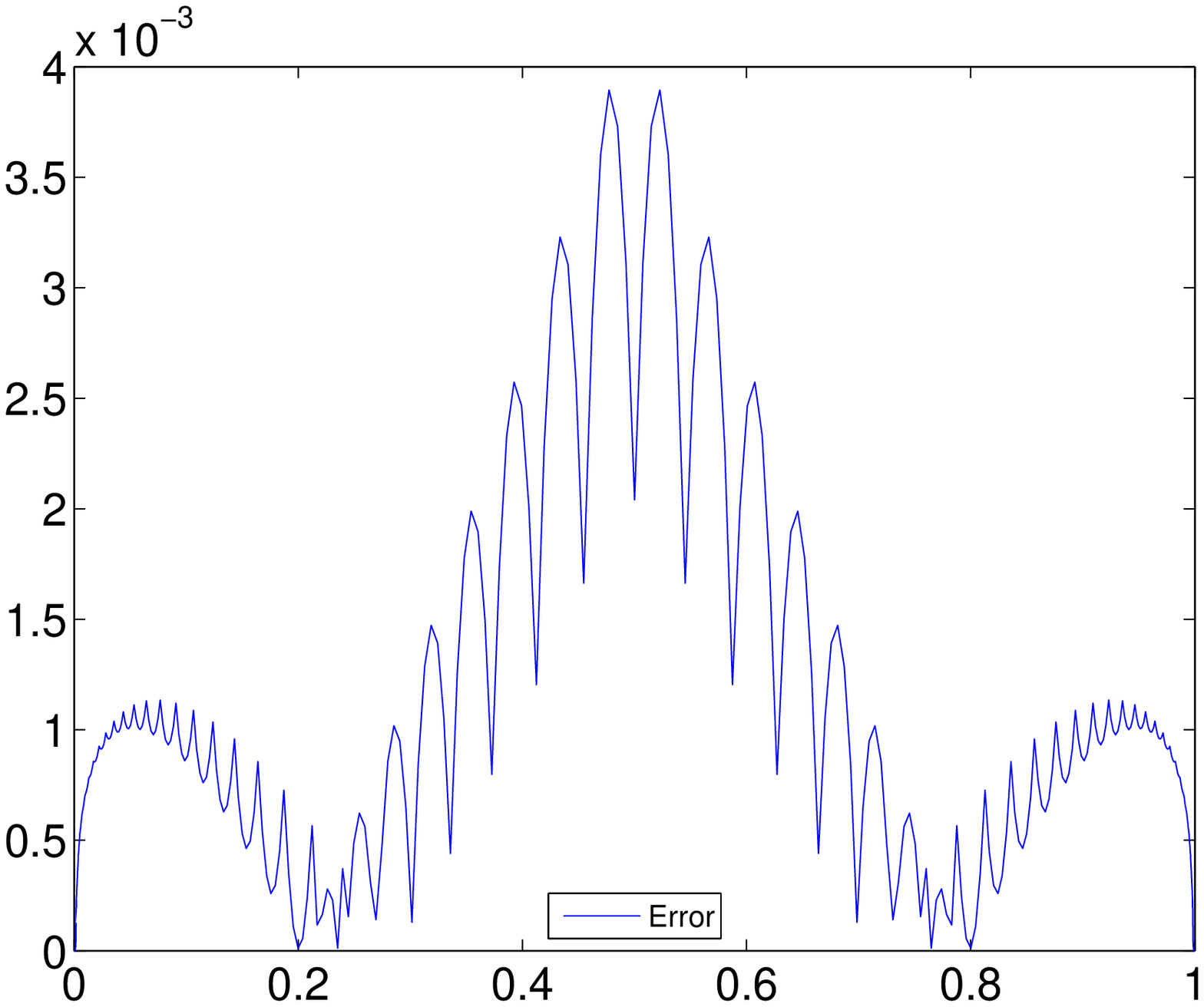}
 \caption{$N=128,\:\varepsilon=2^{-12}$}
\label{slika11f}
\end{subfigure}
\caption{The graphs error}
\label{slikagreska}
\end{figure}

\bibliographystyle{amsplain}
\bibliography{Manuscript}
\end{document}